\newcommand{\CDH}{\ensuremath{\mathsf{CDH}}}
\newcommand{\filt}{\mathcal{F}}
\newcommand{\powerw}{\mathcal{P}(\omega)}
\newcommand{\cantor}{2\sp\omega}
\newcommand{\pair}[1]{\langle #1 \rangle}
\newcommand{\R}{\mathbb{R}}
\newcommand{\Q}{\mathbb{Q}}
\newcommand{\cube}{\mathcal{Q}} 
\newcommand{\cubein}{\mathsf{s}} 
\newcommand{\id}[1]{\mathbf{1}_{#1}}
\newcommand{\A}{\mathcal{A}}
\newcommand{\p}{\mathfrak{p}}
\newcommand{\homeo}[1]{\mathcal{H}(#1)}
\newcommand{\cont}{\mathfrak{c}}
\newcommand{\G}{\mathcal{G}}
\newcommand{\poset}{\mathbb{P}}
\newtheoremstyle{theorem}
     {11pt}
     {11pt}
     {}
     {}
     {\bfseries}
     {}
     {.5em}
     {\noindent\thmnumber{#2}. \thmname{#1}{\rm\thmnote{#3}}}
\theoremstyle{theorem}
\newtheorem{thm}{Theorem}[section]
\newtheorem{lemma}[thm]{Lemma}
\newtheorem{propo}[thm]{Proposition}
\newtheorem{coro}[thm]{Corollary}
\newtheorem{ques}[thm]{Question}
\title{Countable dense homogeneity of function spaces}
\author[Hern\'andez-Guti\'errez]{Rodrigo Hern\'andez-Guti\'errez}
\address[Hern\'andez-Guti\'errez]{Departamento de Matem\'aticas, Universidad Aut\'onoma Metropolitana campus Iztapalapa, Av. San Rafael Atlixco 186, Col. Vicentina, Iztapalapa, 09340, Mexico city, Mexico}
\email[Hern\'andez-Guti\'errez]{rodrigo.hdz@gmail.com}
\date{\today}
\subjclass[2010]{Primary: 54D80; Secondary: 54A35, 54C35}
\keywords{Countable dense homogeneous, Non-meager P-filter, Pointwise convergence topology}
\begin{document}

\begin{abstract}
  In this paper we consider the question of when the space $C_p(X)$ of continuous real-valued functions on $X$ with the pointwise convergence topology is countable dense homogeneous. In particular, we focus on the case when $X$ is countable with a unique non-isolated point $\infty$. In this case, $C_p(X)$ is countable dense homogeneous if and only if the filter of open neighborhoods of $\infty$ is a non-meager $P$-filter.
 \end{abstract}

 \maketitle
 
\section{Introduction}
 
 All spaces considered are assumed to be Tychonoff.
 
A space $X$ is countable dense homogeneous (\CDH{}, henceforth) if $X$ is separable and whenever $D,E\subset X$ are countable dense subsets, there is a homeomorphism $h\colon X\to X$ such that $h[D]=E$. Among examples of \CDH{} spaces we have the Euclidean spaces, the Hilbert cube and the Cantor set. For updated surveys on \CDH{} spaces, see sections 14, 15 and 16 of \cite{arh-vm-homogeneity}; and \cite{hru-vm-cdh_survey}.

One of the most notable open problems in the theory of \CDH{} spaces is the existence of metric \CDH{} spaces that are not Polish; this is Problem 6 in \cite{openprobCDH}. This problem was solved in \cite{FarahHrusakMartinez05} where the authors construct a meager-in-itself \CDH{} subspace of the reals. Later, another example of a  \CDH{} non-Polish subspace of the reals that is a Baire space was given in \cite{hg-hr-vm}.

Another related result is that $\R\sp\kappa$ is \CDH{} if and only if $\kappa<\p$. This result was proved in two steps: first Stepr\=ans and Zhou (\cite{step-zhou}) proved sufficiency and later Hru\v s\'ak and Zamora-Avil\'es (\cite{hrus-zam}) proved necessity.

Hence, it is natural to consider the space $C_p(X)$ of real-valued continuous functions defined on $X$, with the topology of pointwise convergence. In 2016, after my first seminar talk at UAM Iztapalapa, I was asked the following question.

\begin{ques}(Vladimir Tkachuk)\label{main-ques}
  When is $C_p(X)$ \CDH{}?
 \end{ques}
 
We start with some general remarks in Section \ref{gen-obs}. After this, in Section \ref{section-filterspace}, we define a class of spaces to which we will restrict our attention for the rest of the paper. The rest of the paper is devoted to proving the next result.

\begin{thm}\label{thm-main}
 Let $X$ be a countable space with a unique non-isolated point $\infty$. Then $C_p(X)$ is \CDH{} if and only if the neighborhood filter of $\infty$ is a non-meager $P$-filter.
\end{thm}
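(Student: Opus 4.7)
Identify $X \setminus \{\infty\}$ with $\omega$. The neighborhood filter of $\infty$ then corresponds to a filter $\filt$ on $\omega$ extending the Fr\'echet filter, and $f \in \R^X$ lies in $C_p(X)$ precisely when $f(n) \to f(\infty)$ along $\filt$. Writing $f$ as the pair $(f(\infty), f\upharpoonright \omega)$, the linear map $(r,g)\mapsto (r,g-r)$ is a homeomorphism of $C_p(X)$ onto $\R\times c_{\filt}$, where $c_{\filt}=\{g\in\R^\omega : g(n)\to 0 \text{ along } \filt\}$ carries the product topology. Crucially, $C_p(X)$ is a topological vector space, hence a homogeneous topological group; translating by any element moves one countable dense set onto another, so it is enough to build a homeomorphism sending a single canonical dense set $D_0$ (say, the rational functions of finite support plus rational constants) to an arbitrary countable dense $D$.

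\emph{Sufficiency.} Assuming $\filt$ is a non-meager $P$-filter, I would run a back-and-forth construction. Enumerating $D_0=\{d_n\}$ and the target $D=\{e_n\}$, I would build a coherent chain of partial homeomorphisms $h_n$ by alternately committing the image of the next $d_n$ and the preimage of the next $e_n$. Each partial step is implemented as a ``piecewise translation'' on a clopen cylinder determined by a finite subset of $\omega$ together with one witnessing set $A_n \in \filt$ controlling behavior near $\infty$; the $P$-property is used to amalgamate the countable family $\{A_n\}$ collected in the construction into a common pseudo-intersection still lying in $\filt$, which is what allows the limit map to remain a self-homeomorphism of $C_p(X)$. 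Non-meagerness (via the standard interval-partition characterization) is used to discharge countably many genericity requirements at once, so that the back-and-forth can be completed.

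\emph{Necessity.} Argue the contrapositive: from a failure of either property of $\filt$, I would construct two countable dense subsets of $C_p(X)$ admitting no homeomorphism between them. If $\filt$ is not a $P$-filter, take a decreasing chain $A_0\supseteq A_1\supseteq\cdots$ in $\filt$ with no pseudo-intersection in $\filt$ and build a dense $D$ whose members are ``tuned to converge at increasing rates on the $A_n$'', so that a hypothetical homeomorphism onto a generic countable dense $D'$ would be forced to manufacture a pseudo-intersection of $\{A_n\}$ inside $\filt$. If $\filt$ is meager, use an interval partition $\{I_k\}$ witnessing meagerness to spread a dense set rigidly across the $I_k$ in a way that cannot be matched by a dense set concentrated on $\filt$-large sets.

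The main obstacle I expect is the necessity direction. Homeomorphisms of $C_p(X)$ are not required to be linear, affine, Lipschitz, or even uniformly continuous in any useful sense, so to rule out any such homeomorphism I must extract a purely topological invariant of countable dense sets that nevertheless encodes enough of the combinatorics of $\filt$. I expect this invariant to arise from a careful analysis of the local neighborhood bases of $C_p(X)$ at points with a fixed value at $\infty$, combined with a Baire category argument that converts the combinatorial failure of $\filt$ into a genuine obstruction to matching dense sets.
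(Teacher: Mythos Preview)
Your outline has genuine gaps in both directions.

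For sufficiency, the mechanism you propose cannot work as stated: $C_p(X)$ is a connected (indeed arcwise connected) topological vector space, so there are no nontrivial clopen cylinders on which to perform ``piecewise translations''; any map built that way would fail to be continuous. The paper avoids this by passing to the compact model $C_\filt\subset\cube=[-1,1]^\omega$ and constructing an autohomeomorphism of the Hilbert cube that preserves $C_\filt$. The homeomorphism is a product of homeomorphisms of three-dimensional blocks $J_n=[-1,1]^3$ (property $(\ast)_3$ is needed precisely because $(\ast)_1$ fails), assembled via the Inductive Convergence Criterion. More importantly, the $P$-property and non-meagerness are \emph{not} used separately as you suggest; the key combinatorial input is Laflamme's characterization (Lemma~\ref{treelemma}): every $\filt$-tree of finite sets has a branch whose union lies in $\filt$. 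The back-and-forth is organized as such a tree, and only one distinguished branch yields a homeomorphism preserving $C_\filt$. A simple pseudo-intersection of witnessing sets $A_n$ would not suffice, because the choices at each stage depend on earlier ones in a branching way.

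For necessity, you correctly identify the difficulty---finding a topological invariant of countable dense sets that survives arbitrary homeomorphisms---but your proposed constructions do not supply one; ``forcing a homeomorphism to manufacture a pseudo-intersection'' is exactly the step that requires justification against non-uniformly-continuous maps. The invariant the paper uses is the \emph{Miller property}: every countable crowded subset contains a crowded subset with compact closure. Proposition~\ref{semiMiller} builds one dense set $D\subset C_\filt$ with this property (using only that $\filt$ is non-meager), and then $\CDH$ transports it to every countable dense set, so $C_p(\xi(\filt))$ itself has the Miller property. Since $\filt$ embeds as a closed subspace, $\filt$ inherits the Miller property, and by the Kunen--Medini--Zdomskyy characterization this forces $\filt$ to be a non-meager $P$-filter. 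Your contrapositive strategy of exhibiting two incompatible dense sets is replaced by this single positive invariant, which is the missing idea.
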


 When the author of this paper was working on the proof of sufficiency in Theorem \ref{thm-main}, he consulted the paper \cite{step-zhou} to look for ideas for the proof. It was then when this author noticed that the proof of Theorem 4 of that paper had a gap. Theorem 4 of \cite{step-zhou} is precisely the statement that $X\sp\kappa$ is \CDH{} when $X\in\{[0,1], 2\sp\omega,\R\}$ and $\kappa<\mathfrak{p}$. In that paper the authors use a $\sigma$-centered poset that forces a homeomorphism $h:X\sp\kappa\to X\sp\kappa$ and their intention is to prove that $h[D]=E$ when $D$ and $E$ are two given countable dense sets. However, after analyzing the proof carefully, it does not seem that the equality $h[D]=E$ necessarily hold.

After completing the proof of Theorem \ref{thm-main}, the author of this paper noticed that he could provide a correct proof of the Stepr\={a}ns-Zhou result by using a similar argument. Thus, in section \ref{correction} we include a such a proof

\section{General remarks}\label{gen-obs}

First, let us state some properties of $C_p(X)$. Recall that $C_p(X)$ is a subspace of $\R\sp{X}$ with the product topology.

\begin{thm}\label{Cp-facts}
  Let $X$ be a Tychonoff space. \begin{enumerate}[label=(\alph*)]
   \item \cite[I.1.5, p. 26]{arkh-top_func_spaces} $C_p(X)$ is separable if and only if there is a separable metrizable space $Y$ and a continuous bijection $f:X\to Y$.
   \item \cite[I.3.4, p. 32]{arkh-top_func_spaces} If $C_p(X)$ is a Baire space, then every compact subset of $X$ is finite.
   \item \cite[I.1.1, p. 26]{arkh-top_func_spaces} $C_p(X)$ is metrizable if and only if $X$ is countable
  \end{enumerate} 
 \end{thm}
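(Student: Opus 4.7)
The plan is to prove the three parts separately, since each uses a different flavor of argument. All three hinge on the fact that basic open sets in $C_p(X)$ are determined by finitely many coordinates, together with Urysohn-style separation coming from the Tychonoff assumption on $X$.

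For (a), in the forward direction, assume $C_p(X)$ is separable with countable dense set $\{f_n : n\in\omega\}$, and define $F\colon X\to\R^\omega$ by $F(x)=(f_n(x))_n$. This is continuous, and for distinct $x,y\in X$ a Tychonoff function $g$ separating them can be approximated by some $f_n$ closely enough at both points to force $F(x)\neq F(y)$, so $F$ is injective. Taking $Y=F[X]$ with the subspace topology from $\R^\omega$ gives a separable metrizable space with a continuous bijection from $X$. Conversely, given a continuous bijection $f\colon X\to Y\subset\R^\omega$ with $Y$ separable metrizable, the coordinates $\pi_n\circ f$ are continuous on $X$ and separate points. The $\Q$-algebra $\A$ they generate (with constants) is countable, and I claim it is dense in $C_p(X)$: given $g\in C_p(X)$, finitely many points $x_1,\ldots,x_k\in X$, and $\varepsilon>0$, injectivity of $f$ makes the vectors $f(x_i)$ distinct, so polynomial interpolation in finitely many separating coordinates produces an element of $\A$ that is $\varepsilon$-close to $g$ at each $x_i$.

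For (b), suppose $K\subset X$ is compact; set $F_n=\{h\in C_p(X):|h(x)|\le n\text{ for all }x\in K\}$. Each $F_n$ is closed in $C_p(X)$ as an intersection of closed slabs, and every continuous real function is bounded on the compactum $K$, so $C_p(X)=\bigcup_n F_n$. If $C_p(X)$ is Baire, some $F_n$ has nonempty interior, hence contains a basic open set $U$ determined by coordinates $x_1,\ldots,x_k$. Assuming $K$ is infinite, pick $y\in K\setminus\{x_1,\ldots,x_k\}$; by Urysohn in the Tychonoff space $X$ there is $\varphi\in C(X)$ vanishing at every $x_i$ with $\varphi(y)$ arbitrarily large. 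Adding $\varphi$ to any $h_0\in U$ keeps us inside $U$ but blows the value at $y$ past $n$, contradicting $U\subset F_n$. Hence $K$ must be finite.

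For (c), if $X$ is countable then $\R^X$ is a countable product of copies of $\R$, therefore metrizable, and $C_p(X)$ inherits metrizability as a subspace. For the converse, if $C_p(X)$ is metrizable then the zero function has a countable neighborhood base $\{U_n\}$; shrinking each $U_n$ to a basic open set, I obtain finite sets $E_n\subset X$ and $\varepsilon_n>0$ with $\{h:|h(x)|<\varepsilon_n\text{ on }E_n\}\subset U_n$. Put $E=\bigcup_n E_n$. If some $y_0\in X\setminus E$ existed, a Urysohn function vanishing on $E$ with value $1$ at $y_0$ would lie in every $U_n$, yet the open neighborhood $\{h:|h(y_0)|<1/2\}$ of $0$ would contain no $U_n$, contradicting first countability. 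So $X=E$ is countable.

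The main point requiring care is the density step in the converse of (a): one must verify that polynomial interpolation on finitely many distinct points of $\R^\omega$, using only coordinates that separate them, can match arbitrary prescribed target values with rational coefficients. This is elementary linear-algebraic interpolation, but it is the only place where the hypothesis on $f$ is really used and deserves an explicit check.
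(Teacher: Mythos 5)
The paper offers no proof of this theorem; it is quoted from Arkhangel'skii's book, so your proposal is supplying arguments rather than paralleling anything in the text. Your proofs of (a), (b) and the easy half of (c) are correct and are essentially the standard ones: the condensation $F(x)=(f_n(x))_n$ built from a countable dense set, the countable $\Q$-algebra generated by separating coordinates together with multivariate Lagrange interpolation (whose rational-coefficient approximation you rightly flag as the point to check), and the decomposition $C_p(X)=\bigcup_n F_n$ with $F_n$ closed, where a Urysohn function supported off the finitely many coordinates of a basic open set escapes $F_n$.

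There is, however, a genuine flaw in the converse direction of (c) as written. You take $E=\bigcup_n E_n$, assume $y_0\in X\setminus E$, and invoke ``a Urysohn function vanishing on $E$ with value $1$ at $y_0$.'' But $E$ is only a countable set, in general neither finite nor closed, and complete regularity separates a point only from a \emph{closed} set not containing it. If $y_0\in\overline{E}$, every continuous function vanishing on $E$ also vanishes at $y_0$, so no such function exists; and at that stage of the argument nothing rules out $y_0\in\overline{E}$. Indeed, in the very spaces $\xi(\filt)$ that this paper studies, taking $E=\omega$ and $y_0=\infty$ shows the claimed function can fail to exist. The repair is immediate and uses exactly the trick from your part (b): work one $n$ at a time. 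For each $n$, since $E_n$ is finite (hence closed, as $X$ is $T_1$) and $y_0\notin E_n$, choose $\varphi_n\in C(X)$ with $\varphi_n\restriction E_n=0$ and $\varphi_n(y_0)=1$; then $\varphi_n$ lies in the basic neighborhood $\{h:|h(x)|<\varepsilon_n \text{ on } E_n\}\subset U_n$ but not in $W=\{h:|h(y_0)|<1/2\}$, so $U_n\not\subset W$ for every $n$, contradicting that $\{U_n\}$ is a neighborhood base at $0$. With this one-line change the proposal is correct.
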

 
 For separable metric spaces, the Baire category theorem gives us some constraints on which spaces can be \CDH{}. First, every \CDH{} space is a topological sum of homogeneous \CDH{} spaces. A homogeneous space can be either \emph{meager} (countable union of nowhere dense sets) or a Baire space. If $X$ is meager, separable and metrizable, it was proved by Fitzpatrick and Zhou \cite[Lemma 3.2]{fitz_zhou-CDH_baire} that $X$ has a countable dense set that is of type $G_\delta$. From this, it follows that a meager, separable and metrizable \CDH{} space has to be a $\lambda$-set, that is, all countable subsets are of type $G_\delta$.
 
 If $X$ is countable, then $C_p(X)$ is metrizable and these Baire category results apply. Since $C_p(X)$ contains topological copies of the reals, it cannot be a $\lambda$-set. Thus, if $X$ is a countable metric space with at least one non-isolated point, then $C_p(X)$ is meager (because $X$ contains a non-trivial convergent sequence) and cannot be \CDH{}. Thus, the following holds.
 
 \begin{propo}\label{C_p-countable_metric}
  Let $X$ be a countable metric space. Then, $C_p(X)$ is \CDH{} if and only if $X$ is discrete. 
  \end{propo}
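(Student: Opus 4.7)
The plan is to prove both implications; the paragraph preceding the statement already lays out most of the necessity argument, so much of the work is organizational.

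For sufficiency, assume $X$ is countable and discrete. Then every real-valued function on $X$ is continuous, so $C_p(X) = \R^X$ carries the full product topology. If $X$ is finite then $C_p(X)$ is a finite-dimensional Euclidean space, which is classically \CDH{}. If $X$ is countably infinite then $C_p(X) \cong \R^\omega$, and the Stepr\=ans--Zhou result quoted in the introduction (applied with $\kappa = \omega < \p$) shows that $\R^\omega$ is \CDH{}.

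For necessity, suppose $X$ has a non-isolated point $p$. Since $X$ is countable and metrizable, some sequence of distinct points of $X \setminus \{p\}$ converges to $p$, yielding an infinite compact subspace of $X$. By part~(b) of Theorem~\ref{Cp-facts}, $C_p(X)$ cannot be a Baire space. On the other hand, $C_p(X)$ is metrizable by part~(c) and is a topological vector subspace of $\R^X$, so it is a homogeneous metric space; by the Banach category theorem, a homogeneous metric space that is not Baire is meager. Hence $C_p(X)$ is meager. By the Fitzpatrick--Zhou result cited in the text, if $C_p(X)$ were \CDH{} then it would be a $\lambda$-set, i.e.\ every countable subset of it would be $G_\delta$. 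However, the constant functions form a closed topological copy of $\R$ inside $C_p(X)$, and $\Q \subset \R$ is not $G_\delta$ in $\R$ by the Baire category theorem applied to $\R$ itself; since $\R$ sits as a closed (hence $G_\delta$) subspace of $C_p(X)$, the corresponding copy of $\Q$ fails to be $G_\delta$ in $C_p(X)$ as well, contradicting the $\lambda$-set property. This forces $C_p(X)$ not to be \CDH{}.

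The main obstacle, which is slight, is assembling the correct classical ingredients in the right order rather than proving anything new: one must verify that the passage from ``not Baire'' to ``meager'' is justified by homogeneity (not merely by $C_p(X)$ being a topological group), and one must handle the absolute--versus--relative $G_\delta$ subtlety for $\Q \subset \R \hookrightarrow C_p(X)$ by noting that the embedded copy of $\R$ is closed in $C_p(X)$.
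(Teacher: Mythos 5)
Your proof is correct and follows essentially the same route as the paper: $C_p(X)$ fails to be Baire by Theorem~\ref{Cp-facts}(b), hence is meager by homogeneity, hence would be a $\lambda$-set by the Fitzpatrick--Zhou argument if it were \CDH{}, which the embedded copy of $\R$ rules out; sufficiency reduces to $\R^\omega$ being \CDH{}. (One minor remark: the closedness of the copy of $\R$ is not actually needed, since a set that is $G_\delta$ in $C_p(X)$ is automatically $G_\delta$ in any intermediate subspace, so $\Q$ would already be $G_\delta$ in $\R$.)
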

  
  We can also say something of $\sigma$-compact spaces using the following result of Tkachuk.

 \begin{thm}\cite[3.16]{tk-no_conv_seq}\label{Cp-dense_noseq}
 Let $X$ be an uncountable, $\sigma$-compact, separable metric space. Then there exists a countable dense subset of $C_p(X)$ that has no non-trivial convergent sequences.
 \end{thm}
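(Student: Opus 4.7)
My plan is to build the countable dense set by a careful diagonal recursion, using that $X$ is big enough to contain uncountably many ``blocking points'' while $C_p(X)$ has a countable base of open sets that must be hit for density.

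First, observe that since $X$ is uncountable, $\sigma$-compact and metrizable, some compact piece $K_n$ of the $\sigma$-compact decomposition is uncountable, so $X$ contains a topological copy of the Cantor set $K \cong 2^\omega$. Tietze extension on the metric space $X$ will let me lift any continuous function on (a finite subset of) $K$ to a continuous function on $X$. Second, since $X$ admits a continuous bijection into itself, Theorem \ref{Cp-facts}(a) gives that $C_p(X)$ is separable, so it has a countable base $\{U_n : n<\omega\}$ consisting of basic open sets of the form $\{f : |f(y_i) - q_i| < \varepsilon_i,\ i \le m\}$ for finitely many $y_i \in X$ and rationals $q_i,\varepsilon_i$.

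Next I would organize the construction as a Cantor scheme inside $K$: assign to each $\sigma \in 2^{<\omega}$ a distinct point $x_\sigma \in K$ in such a way that the map $\sigma \mapsto x_\sigma$ extends to a continuous injection of $2^\omega$ into $K$ (i.e.\ the points $x_{\sigma 0}, x_{\sigma 1}$ lie in disjoint clopen neighborhoods shrinking to the branch). Simultaneously, I recursively select $f_n \in U_n$ for each $n<\omega$ to form the candidate set $D = \{f_n : n<\omega\}$. The selection must satisfy two constraints at stage $n$: (i) $f_n \in U_n$, so that $D$ is dense, and (ii) an ``oscillation promise'' at the blocking points $x_\sigma$ with $|\sigma| \le n$, for instance $|f_n(x_\sigma) - f_m(x_\sigma)| \ge 1$ whenever $m<n$ and the binary pattern of $(f_m(x_\sigma), f_n(x_\sigma))$ matches the branching code of $\sigma$. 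The freedom to do this comes from the fact that $U_n$ only constrains $f_n$ on a finite set $F_n \subset X$, so after choosing $x_\sigma \in K \setminus F_n$ (possible because $F_n$ is finite and $K$ uncountable), I can use Tietze to prescribe $f_n(x_\sigma)$ arbitrarily while keeping $f_n \in U_n$.

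The verification that $D$ has no non-trivial convergent sequence is then a standard branch-picking argument: given any injective sequence $(f_{n_k})_{k<\omega}$ in $D$, I construct $\sigma \in 2^\omega$ by recording, at the $k$-th step, which of two possible oscillation patterns is forced by the pair $(f_{n_{k-1}}, f_{n_k})$; the oscillation promise guarantees that $(f_{n_k}(x_{\sigma\restriction m}))_k$ fails to be Cauchy for some $m$, so the sequence is not pointwise convergent at $x_{\sigma\restriction m}$, hence not convergent in $C_p(X)$. Density is automatic from (i).

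The main obstacle I foresee is organizing the bookkeeping so that a \emph{single} point $x_\sigma \in K$ can serve to block \emph{all} subsequences that ``branch through'' $\sigma$; this is the reason for coding the oscillations by the tree $2^{<\omega}$ rather than trying to assign one blocking point per infinite subset of $\omega$ (of which there are $2^{\aleph_0}$, far too many to fit into a countable recursion). A secondary technical point is verifying that at stage $n$ one really can pick $f_n \in U_n$ while simultaneously satisfying the finitely many oscillation constraints accumulated so far; this reduces to extending a continuous function specified on a finite set, which is always possible in a Tychonoff (indeed metric) space.
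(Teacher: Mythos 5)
The paper does not actually prove this statement (it is quoted from Tkachuk's paper), so I am judging your outline on its own merits; it has two genuine gaps, each fatal to the argument as written.

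First, the density mechanism fails. You assert that separability of $C_p(X)$ gives a countable base $\{U_n\}$ and that choosing $f_n\in U_n$ makes $D$ dense. For uncountable $X$ the space $C_p(X)$ is not metrizable (Theorem \ref{Cp-facts}(c)), hence by Urysohn's metrization theorem not second countable; worse, it has no countable $\pi$-base. Indeed, every nonempty open $U\subset C_p(X)$ contains a nonempty basic open set supported on a finite set $S_U\subset X$; given countably many $U_n$, pick $x\in X\setminus\bigcup_n S_{U_n}$ and note that $W=\{f: 0<f(x)<1\}$ contains no $U_n$, because the basic set inside $U_n$ does not constrain $f(x)$ and so contains functions with $f(x)=2$. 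Since any countable family of open sets with the property that ``every set meeting all of them is dense'' would be a $\pi$-base, no recursion of the form ``put $f_n\in U_n$'' can certify density. You must instead start from a countable dense set supplied by Theorem \ref{Cp-facts}(a) and perturb it in a way that preserves density globally (e.g.\ by uniformly small perturbations), which is a different mechanism from the one you describe.

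Second, the blocking mechanism is provably impossible as stated. You want every injective sequence in $D$ to fail to be pointwise Cauchy at one of the countably many points $x_\sigma$, $\sigma\in2^{<\omega}$. Consider the restriction map $r\colon C_p(X)\to\R^{\{x_\sigma:\,\sigma\in2^{<\omega}\}}\cong\R^\omega$: it is continuous with dense image (Tietze), so if $D$ is dense then $r[D]$ is a countable dense, hence crowded, subset of the Polish space $\R^\omega$, and therefore contains injective sequences converging to points of itself; pulling back indices yields an injective sequence in $D$ that is pointwise Cauchy at every $x_\sigma$. So no system of oscillation promises at countably many points can do what you claim, and your reason for rejecting continuum many blocking points is a red herring: one identifies the Cantor set $K$ with $\powerw$ and lets $f_n\restriction K$ be (a rescaling of) $A\mapsto\chi_A(n)$, which handles all $\cont$ points of $K$ simultaneously rather than one per step of a recursion; pointwise convergence of an injective $(f_{n_k})$ on $K$ would then force every $A\subset\omega$ to almost contain or be almost disjoint from $\{n_k\}$, which is absurd. (Also note that the theorem only requires that no injective sequence converge to a point \emph{of} $D$, a weaker property that a perturbation argument can deliver.) Finally, a sanity check: your outline uses $\sigma$-compactness only to extract a Cantor set and apply Tietze, but every one of those ingredients is equally available for the irrationals, for which the statement is explicitly open (Question \ref{question-Polish}); this alone shows the outline cannot be complete.
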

 
 \begin{coro}
  Let $X$ be an uncountable, $\sigma$-compact, separable metric space. Then $C_p(X)$ is not \CDH{}.
 \end{coro}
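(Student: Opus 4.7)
The plan is to contrast two countable dense subsets of $C_p(X)$: one produced by Theorem \ref{Cp-dense_noseq}, having no non-trivial convergent sequence, and another one that does contain such a sequence. Since any self-homeomorphism of $C_p(X)$ must send convergent sequences to convergent sequences, no homeomorphism can map one of these dense sets onto the other, contradicting countable dense homogeneity.

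First I would verify that $C_p(X)$ is indeed separable, so that the notion of \CDH{} applies: this follows from Theorem \ref{Cp-facts}(a) applied to the identity $X\to X$, since $X$ itself is separable metric. Let $D\subseteq C_p(X)$ be the countable dense subset supplied by Theorem \ref{Cp-dense_noseq}, so $D$ contains no non-trivial convergent sequence of $C_p(X)$.

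Next I would exhibit a countable dense $E\subseteq C_p(X)$ that does contain a non-trivial convergent sequence. Starting from any countable dense $D_0\subseteq C_p(X)$, pick any $f\in C_p(X)$ and set $f_n = f + 1/n$; then $f_n\to f$ in the pointwise topology, and one may let $E = D_0 \cup \{f\} \cup \{f_n : n\in\omega\}$. This $E$ is countable and dense, and $\{f_n\}$ is a non-trivial convergent sequence lying in $E$.

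Finally, suppose for contradiction that $C_p(X)$ is \CDH{}. Then there exists a homeomorphism $h\colon C_p(X)\to C_p(X)$ with $h[D]=E$. Pick distinct $g_n\in D$ with $h(g_n)=f_n$ and let $g=h^{-1}(f)$; continuity of $h^{-1}$ forces $g_n\to g$ in $C_p(X)$, a non-trivial convergent sequence inside $D$, contradicting the defining property of $D$. Hence $C_p(X)$ cannot be \CDH{}. The argument is essentially immediate from Theorem \ref{Cp-dense_noseq}; no serious obstacle arises beyond checking separability of $C_p(X)$ and producing a visible convergent sequence, both of which are elementary.
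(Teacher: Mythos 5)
Your proof is correct and follows essentially the same route as the paper: both use Tkachuk's theorem to produce a countable dense set with no non-trivial convergent sequences, contrast it with a countable dense set that visibly contains one, and observe that a homeomorphism witnessing \CDH{} would transport the convergent sequence back into the first set. The paper phrases the final step as ``$D$ and $D\cup S$ are not homeomorphic'' while you pull the sequence back explicitly via $h^{-1}$, but these are the same argument.
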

  \begin{proof}
  By Theorem \ref{Cp-dense_noseq}, $C_p(X)$ has a countable dense set $D$ with no non-trivial convergent sequences. Since $C_p(X)$ contains copies of the reals, it also contains many non-trivial convergent sequences; choose one $S$. Then $D\cup S$ is also countable dense. But $D$ and $D\cup S$ are not homeomorphic. The conclusion follows.
 \end{proof}
 
 For the sake of completeness, we include the following questions which the author was not able to answer.
  
 \begin{ques}\label{question-Polish}
  Let $X$ be Polish and uncountable (e.g., the irrationals). Is there a countable dense subset of  $C_p(X)$ that has no non-trivial convergent sequences?
 \end{ques}
 
 \begin{ques}\label{question-metrizable}
  Is there any non-discrete metrizable space $X$ such that $C_p(X)$ is \CDH{}?
 \end{ques}
 
 Let us focus our attention on the case when $X$ is countable but not metrizable. In this case, $C_p(X)$ can be embedded as a dense subset of the infinite-dimensional metric space $\R\sp\omega$. So perhaps, it is possible to find an example of an $X$ where it is possible to prove that $C_p(X)$ is \CDH{} \emph{with respect to} $\R\sp\omega$, that is, given $D,E\subset C_p(X)$ countable dense it might be possible to construct a homeomorphism $h:\R\sp\omega\to\R\sp\omega$ such that $h[C_p (X)]=C_p(X)$ and $h[D]=E$. 
 
 After the discussion regarding Baire category above, the first natural question is whether there is a countable space $X$ such that $C_p(X)$ is a Baire space. It turns out that this question has already been answered in the affirmative. In the next section we discuss this in more detail.

\section{Spaces with a unique non-isolated point}\label{section-filterspace}

Given a filter $\filt\subset\powerw$, consider the space $\xi(\filt)=\omega\cup\{\infty\}$, where every point of $\omega$ is isolated and every neighborhood of $\infty$ is of the form $\{\infty\}\cup A$ with $A\in\filt$. All of our filters will be \emph{free}, that is, they contain the Fr\'echet filter. In this case, $\xi(\filt)$ is Hausdorff and $\infty$ is not isolated. When $\filt$ is the Fr\'echet filter, $\xi(\filt)$ is homeomorphic to a convergent sequence. Clearly, a space $X$ is homeomorphic to a space of the form $\xi(\filt)$ if and only if $X$ is a countable space with a unique non-isolated point.

A filter in $\omega$ is a subset of $\powerw$ so it can also be viewed as a subset of the Cantor set via the identification of a subset with its characteristic function. It turns out that a characterization of when $C_p(\xi(\filt))$ is a Baire space is related to this topology. 

 \begin{lemma}\cite[Theorem 6.5.6, p. 391]{vm-inf_dim_funct_spaces}\label{Cp_Baire_char}
  Let $\filt\subset\powerw$ be a free filter. Then $C_p(\xi(\filt))$ is a Baire space if and only if $\filt$ is non-meager (as a subset of the Cantor set).
 \end{lemma}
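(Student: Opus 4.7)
The bridge between the topological statement about $C_p(\xi(\filt))$ and the combinatorial statement about $\filt$ is Talagrand's classical characterization of meager filters: $\filt$ is meager as a subset of $\cantor$ if and only if there is a partition $\omega=\bigsqcup_{n\in\omega}I_n$ into finite consecutive intervals such that for every $A\in\filt$ the set $\{n:I_n\subseteq A\}$ is infinite. The plan is to prove both directions of the equivalence by translating through this reformulation. Throughout, it is convenient to use the decomposition $C_p(\xi(\filt))\cong\R\times C_0(\filt)$, where $C_0(\filt)=\{g\in\R\sp\omega:\lim_\filt g=0\}$, so Baireness of $C_p(\xi(\filt))$ reduces to Baireness of $C_0(\filt)$ with the product topology.

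For the direction that $\filt$ meager implies $C_p(\xi(\filt))$ is not Baire, fix a Talagrand witness $\{I_n\}$. For every $f\in C_p(\xi(\filt))$ and every $m$, continuity at $\infty$ forces $\{k:|f(k)-f(\infty)|<1/m\}\in\filt$, hence this set contains $I_n$ for infinitely many $n$; equivalently, $\max_{k\in I_n}|f(k)-f(\infty)|<1/m$ for infinitely many $n$. I would use this to write $C_p(\xi(\filt))$ as a countable union of closed nowhere dense sets indexed by the rate at which the ``good'' Talagrand intervals become available for $f$, generalizing the standard proof that the sequence space $c_0$ (which is $C_0(\filt)$ for the meager Fr\'echet filter, with each $I_n$ a singleton) fails to be Baire in the product topology of $\R\sp\omega$. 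The chief obstacle on this side is that for a general meager $\filt$ the elements of $C_p(\xi(\filt))$ need not be globally bounded, so the simple \emph{bounded-by-$N$} decomposition of the $c_0$ argument must be replaced by one keyed to the Talagrand partition.

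For the converse, given a countable family $(V_m)$ of dense open subsets of $C_p(\xi(\filt))$ and a nonempty basic open $W$, construct $f\in W\cap\bigcap_m V_m$ by a diagonal induction. At stage $m$, fix finitely many more values of $f$ so as to enter $V_m$, and simultaneously pick a set $A_m\in\filt$ disjoint from the coordinates already fixed, committing to keep $|f(k)-f(\infty)|<1/m$ for $k\in A_m$ (using that $A\setminus F\in\filt$ for any $A\in\filt$ and any finite $F$, since $\filt$ is free). The delicate point is to ensure that, as the induction progresses, the density requirements coming from each $V_m$ do not force values of $f$ at coordinates of previously reserved $A_i$ that violate the earlier commitments; non-meagerness of $\filt$ (equivalently, the failure of Talagrand's condition for every interval partition) provides exactly the combinatorial freedom needed to pick the $A_m$ so that this coordination succeeds. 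At the end of the construction, for each $m$ we have $\{k:|f(k)-f(\infty)|<1/m\}\supseteq A_m\in\filt$, so $f$ is continuous at $\infty$ as required.

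The main obstacle, on both sides, is coordinating the finitely-supported conditions that open sets in $C_p(\xi(\filt))$ can impose with the infinitely-supported continuity requirement at $\infty$; Talagrand's characterization is exactly what makes this coordination succeed or fail in tandem with non-meagerness of $\filt$.
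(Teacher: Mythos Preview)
The paper does not supply a proof of this lemma; it is quoted verbatim from van Mill's book (Theorem 6.5.6), so there is no argument in the paper to compare your proposal against.

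That said, your proposal is not yet a proof but a plan with acknowledged holes, and the holes are exactly where the work lies. In the ``meager $\Rightarrow$ not Baire'' direction you correctly observe that the naive $c_0$-style decomposition into bounded pieces fails, and you say the decomposition ``must be replaced by one keyed to the Talagrand partition'', but you never write it down. This is the entire content of that implication; until you specify closed nowhere-dense sets $F_N\subseteq C_0(\filt)$ with $\bigcup_N F_N=C_0(\filt)$ and verify nowhere-density \emph{inside} $C_0(\filt)$ (not just in $\R^\omega$), nothing has been shown. In the ``non-meager $\Rightarrow$ Baire'' direction the gap is even wider: you assert that non-meagerness ``provides exactly the combinatorial freedom needed to pick the $A_m$ so that this coordination succeeds'', but this is precisely the nontrivial part. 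The failure of Talagrand's condition for \emph{every} interval partition does not by itself hand you, at stage $m$ of a recursion, a set $A_m\in\filt$ compatible with finitely many earlier commitments and with the basic open set you are trying to enter; you need to say which combinatorial reformulation of non-meagerness you are invoking (e.g., the existence, for any sequence of finite sets, of an element of $\filt$ meeting cofinitely many of them in a controlled way) and check that it matches the demands of the induction.

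In short: identifying Talagrand's characterization as the bridge is correct and is indeed how the textbook proof proceeds, but both directions of your outline stop exactly at the point where an argument is required.
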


Naturally, it is possible to ask when a free filter is \CDH{}. At first, this seems to be another problem entirely but surprisingly it will be related to our problem. Recall that a filter $\filt\subset\powerw$ is a $P$-filter if whenever $\{F_n:n<\omega\}\subset\filt$ there exists $F\in\filt$ such that $F\setminus F_n$ is finite for all $n<\omega$.
 
 \begin{thm}\cite{hg-hr-CDH,kunen-medini-zdomskyy}\label{char-nonmeagerP}
  Let $\filt\subset\powerw$ be a free filter, and consider the Cantor set topology in $\powerw$. Then $\filt$ is \CDH{} if and only if $\filt$ is a non-meager $P$-filter.
 \end{thm}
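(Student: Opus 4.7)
My plan is to prove each direction of the equivalence separately, with the forward (sufficient) direction built on a back-and-forth construction of a permutation of $\omega$ that preserves $\filt$ and carries one countable dense subset onto another, and the reverse direction built on two distinct combinatorial obstructions.

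For the sufficient direction, let $\filt$ be a non-meager $P$-filter and let $D,E\subset\filt$ be countable dense subsets. Every permutation $\pi\colon\omega\to\omega$ induces a self-homeomorphism of $\cantor=\powerw$ by $A\mapsto\pi[A]$, and this restricts to a homeomorphism of $\filt$ provided $\pi[\filt]=\filt$. I would enumerate $D=\{d_n:n<\omega\}$ and $E=\{e_n:n<\omega\}$ and construct $\pi$ as the union of a nested sequence of finite partial bijections $\pi_n\colon F_n\to G_n$ with $\bigcup F_n=\omega=\bigcup G_n$. At stage $n$ one must (a) commit $\pi$ on enough of the set $d_n$ (and, on alternating stages, enough of $e_n$) so that the as yet unspecified part of $\pi$ can still be chosen to send $d_n$ \emph{as a set} onto some $e_m$, and (b) guarantee that any completion of $\pi$ will preserve $\filt$. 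Density of $D$ and $E$ in $\filt$ gives the flexibility needed for (a), since basic neighborhoods in $\cantor$ are determined by finitely many coordinates. For (b) one bookkeeps an enumeration of filter elements and uses the $P$-filter property to diagonalize through them; non-meagerness, via the Talagrand-style combinatorial characterization, ensures that the resulting pseudo-intersections remain inside $\filt$.

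For the necessary direction, assume $\filt$ is \CDH{}. If $\filt$ were meager, then as a meager, separable, metrizable, homogeneous \CDH{} space it would have to be a $\lambda$-set by the Fitzpatrick--Zhou argument recalled in Section \ref{gen-obs}, but a suitably chosen countable subset of $\filt$ (built from filter elements that converge to a non-$\filt$ point of $\cantor$) fails to be $G_\delta$ in $\filt$, a contradiction. If $\filt$ failed to be a $P$-filter, fix a decreasing sequence $\{F_n:n<\omega\}\subset\filt$ admitting no pseudo-intersection in $\filt$; then I would construct two countable dense subsets of $\filt$ distinguished by a convergence invariant, with one admitting a sequence that converges in $\cantor$ to a pseudo-intersection of $\{F_n:n<\omega\}$ and the other not, exploiting the fact that any homeomorphism of $\filt$ must send convergent sequences to convergent sequences with the same type of limit behavior modulo $\filt$.

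The main obstacle I foresee is in the sufficient direction. The commitment to send $d_n$ onto $e_m$ is an infinitary constraint: at each finite stage $\pi_n$ pins down only a finite fragment of $\pi$, while leaving open the possibility of completing the match between $d_n$ and $e_m$. Coordinating this for all $n$ simultaneously with the filter-preservation bookkeeping, so that the final $\pi$ and $\pi^{-1}$ both send $\filt$-sets to $\filt$-sets, is the core combinatorial task, and is precisely where the non-meager $P$-filter hypothesis is indispensable; this is the content of the main lemmas in \cite{hg-hr-CDH,kunen-medini-zdomskyy}.
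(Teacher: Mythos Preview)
The paper does not supply its own proof of Theorem~\ref{char-nonmeagerP}; the result is quoted from \cite{hg-hr-CDH,kunen-medini-zdomskyy}. That said, your outline for the sufficiency direction has a genuine and fatal gap.

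You propose to realise the needed homeomorphism of $\filt$ as the map $A\mapsto\pi[A]$ induced by a permutation $\pi$ of $\omega$. Such maps are far too rigid: they preserve the inclusion relation, the cardinality $|A\triangle B|$ for every pair, and in particular the cardinality of $\omega\setminus A$. Concretely, let $D$ be the (countable, dense) family of all cofinite subsets of $\omega$, and let $E=D\cup\{e_0\}$ for any coinfinite $e_0\in\filt$; such $e_0$ exists because a non-meager filter properly extends the Fr\'echet filter. Every permutation sends cofinite sets to cofinite sets, so the induced homeomorphism sends $D$ onto $D$, never onto $E$. No back-and-forth along finite partial bijections of $\omega$ can overcome this, because the obstruction is already present at the level of the completed permutation.

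The actual proofs in \cite{hg-hr-CDH,kunen-medini-zdomskyy} build a homeomorphism of $2^\omega$ that is \emph{not} of permutation type. The engine is Laflamme's characterisation recorded here as Lemma~\ref{treelemma}: one grows an $\filt$-tree of finite sets whose nodes carry finite-dimensional partial homeomorphisms, and the non-meager $P$-filter hypothesis produces a branch whose limit homeomorphism preserves $\filt$. This is precisely the template the present paper adapts in Section~5 for $C_\filt\subset\cube$; comparing that argument with your sketch shows where the extra freedom (homeomorphisms of cubes rather than coordinate permutations) is used.

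Your necessity sketch is closer in spirit to \cite{kunen-medini-zdomskyy}, which routes the implication through the Miller property. The $\lambda$-set step is reasonable, but your proposed obstruction to the $P$-filter property (two dense sets distinguished by a ``convergence invariant'') is not an argument as written; the cited papers instead exhibit a single countable dense set in which every crowded subset has compact closure (the Miller property), and then show this forces the filter to be a non-meager $P$-filter.
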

 
 \emph{Non-meager $P$-filters} are a natural generalization of $P$-points; it is still an open question whether they exist in ZFC but if they do not exist, then there is an inner model with a large cardinal, see \cite[section 4.4.C]{bart}.
 
 In the remainder of this section the author would like to convince the reader that the spaces $C_p(\xi(\filt))$ and $\filt\sp{\omega}$ are very similar. Perhaps one of the first objections to this claim is the fact that $C_p(\xi(\filt))$ is connected (in fact, arcwise connected) dense subset of the Hilbert cube and $\filt\sp{\omega}$ is a zero-dimensional space. So let us consider the following set that is a zero-dimensional space
 $$
 C_p(X,\cantor)=\left\{f\in(\cantor)\sp{X}:f\textrm{ is continuous}\right\},
 $$
 where $X$ is a countable regular topological space. We will show that $C_p(\xi(\filt),\cantor)$ and $\filt\sp{\omega}$ are not only similar but also homeomorphic (Theorem \ref{Cp-zerodim}).

 \begin{lemma}\label{lemmasum}
  Let $\filt\subset\powerw$ be a free filter. Then $\filt\times2$ is homeomorphic to $\filt$. 
 \end{lemma}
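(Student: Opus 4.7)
The plan is a simple dichotomy: since $\filt$ is a free filter, it contains every cofinite set, so either $\filt$ coincides with the Fr\'echet filter or there exists $A\in\filt$ with $\omega\setminus A$ infinite.

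In the Fr\'echet case, $\filt$ is countable (its elements correspond bijectively to the finite subsets of $\omega$ under $A\mapsto\omega\setminus A$), zero-dimensional and metrizable as a subspace of $\cantor$, and crowded: for any $B\in\filt$ and any basic cylinder neighborhood fixing the coordinates on a finite $F\subseteq\omega$, pick $n\notin F$ and note that $B\triangle\{n\}\in\filt$ is a distinct nearby point. Sierpi\'nski's classical topological characterization of $\Q$ then gives $\filt\cong\Q$, and the same characterization applied to $\filt\times 2$ (which retains all four properties) gives $\filt\times 2\cong\Q$, hence $\filt\times 2\cong\filt$.

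In the remaining case, fix $A\in\filt$ with $\omega\setminus A$ infinite and use the natural homeomorphism $\cantor\cong 2^A\times 2^{\omega\setminus A}$ given by $B\mapsto(B\cap A,\,B\cap(\omega\setminus A))$. The key observation is that $B\in\filt$ if and only if $B\cap A\in\filt$: the forward direction is filter closure under intersection with $A\in\filt$, and the backward direction is filter closure under supersets of $B\cap A$. Consequently, membership in $\filt$ depends only on the $A$-coordinate, yielding $\filt\cong Y\times 2^{\omega\setminus A}$ where $Y=\{C\subseteq A:C\in\filt\}$. Since $\omega\setminus A$ is countably infinite, $2^{\omega\setminus A}$ is homeomorphic to $\cantor$, and the standard identity $\cantor\times 2\cong\cantor$ yields
\[\filt\times 2\cong Y\times\cantor\times 2\cong Y\times\cantor\cong\filt.\]

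The substantive step is the product decomposition in the non-Fr\'echet case, which extracts a Cantor-set factor from $\filt$; after that, the desired identity is immediate. The Fr\'echet case must be handled separately because that filter is countable and hence cannot carry a Cantor-set factor, but it dissolves at once by recognizing $\filt\cong\Q$.
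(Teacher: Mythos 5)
Your proof is correct. The Fr\'echet case is handled the same way as in the paper (both reduce to $\Q\times2\approx\Q$), but in the main case you take a genuinely different route. The paper fixes $X\in\filt$ with infinite complement, splits the copy $F\subset\cantor$ of $\filt$ into the two clopen pieces determined by the value at a single coordinate $x_0\in\omega\setminus X$, shows via a coordinate flip that the two pieces are mutually homeomorphic and via a shift embedding along $\omega\setminus X$ that each is homeomorphic to $F$, and concludes that $F$ is the disjoint union of two clopen copies of itself, i.e.\ $F\approx F\times 2$. You instead observe globally that $B\in\filt$ if and only if $B\cap A\in\filt$ (one direction is upward closure, the other is closure under intersection with $A\in\filt$), so that under $\cantor\approx 2^{A}\times 2^{\omega\setminus A}$ the filter factors as $Y\times 2^{\omega\setminus A}$ with a full Cantor-set factor, and the identity $\cantor\times2\approx\cantor$ finishes. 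Your factorization is essentially the ``all coordinates at once'' version of the paper's shift map $\psi$; it is slicker and buys more, since it shows $\filt\approx\filt\times\cantor$ and hence $\filt\times K\approx\filt$ for every nonempty compact metrizable zero-dimensional $K$ (Brouwer's theorem applied to $\cantor\times K$), of which the lemma is the case $K=2$. The paper's version is marginally more self-contained, needing only the one-coordinate splitting. Both arguments are complete.
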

 \begin{proof}
  If $\filt$ is the Fr\'echet filter, then it is homeomorphic to the rationals $\mathbb{Q}$ and $\mathbb{Q}\times 2\approx\mathbb{Q}$. So assume there is $X\in\filt$ such that $\omega\setminus X$ is infinite. Let $\omega\setminus X=\{x_m:m<\omega\}$ be an enumeration. 
  
  Consider $F\subset\cantor$ the set of characteristic functions of elements of $\filt$, which is homeomorphic to $\filt$. Consider the clopen set $U_i=\{f\in\cantor:f(x_0)=i\}$ for $i\in 2$. Then $\{U_0\cap F,U_1\cap F\}$ is a partition of $F$; we will prove that $U_0\cap F\approx U_1\cap F\approx F$.
  
  First, consider $\varphi:\cantor\to\cantor$ such that $\varphi(f)(n)=f(n)$ if $n\neq x_0$ and $\varphi(f)(n)=1-f(n)$ if $n= x_0$, for all $f\in\cantor$. Then $\varphi$ is a homeomorphism and $\varphi[U_0]=U_1$. Notice that also $\varphi[F]=F$ because $\filt$ is closed under finite changes. Then $\varphi$ is a witness of $U_0\cap F\approx U_1\cap F$.
  
  Now we define $\psi:\cantor\to\cantor$ for $f\in\cantor$ as follows:
  $$
  \psi(f)(n)=\left\{
  \begin{array}{ll}
   f(n), & \textrm{ if } n\in X,\\
   0, & \textrm{ if } n=x_0,\textrm{ and}\\
   f(x_{k-1}), & \textrm{ if } n=x_k, k>0.\\
  \end{array}\right.
  $$
  It is easy to see that $\psi$ is an embedding with image $U_0$. From $X\in\filt$ it is easy to see that $\psi[F]=U_0\cap F$. Thus, $\psi$ witnesses that $F\approx U_0\cap F$.
  
  Then we have expressed $F$ as the union of two proper clopen subsets $U_0\cap F$ and $U_1\cap F$, each homeomorphic to $F$. This easily implies the statement of the lemma.
 \end{proof}
 
 For $\filt\subset\powerw$, let $\filt\sp{(\omega)}$ be the set of all subsets of $\omega\times\omega$ of the form $\bigcup\{\{n\}\times A_n:n<\omega\}$, where $\{A_n:n<\omega\}\subset\filt$. It is easy to see that $\filt\sp{(\omega)}$ is a filter on $\omega\times\omega$ and it is homeomorphic to $\filt\sp{\omega}$ with the product topology.
 
 \begin{lemma}\label{powerfilter}
  Let $\filt\subset\powerw$ be a free filter. 
  \begin{enumerate}[label=(\alph*)]
   \item $\filt$ is non-meager if and only if $\filt\sp{(\omega)}$ is non-meager.
   \item $\filt$ is a $P$-filter if and only if $\filt\sp{(\omega)}$ is a $P$-filter.
  \end{enumerate}
 \end{lemma}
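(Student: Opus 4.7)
The plan is to reduce both parts to standard combinatorial characterizations: Talagrand's theorem for (a) (a free filter $\filt$ is meager iff there is a partition $\{I_k:k<\omega\}$ of $\omega$ into finite sets such that every $F\in\filt$ meets cofinitely many $I_k$), and the pseudo-intersection property for (b).

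For the easy direction of (a), namely $\filt$ meager $\Rightarrow$ $\filt^{(\omega)}$ meager, I would take a witnessing partition $\{I_k\}$ for $\filt$ and define $J_i:=\bigsqcup_{n+k=i}\{n\}\times I_k$, which is a partition of $\omega\times\omega$ into finite sets. Any $G=\bigcup_n\{n\}\times A_n\in\filt^{(\omega)}$ satisfies $G\cap J_i\supseteq\{0\}\times(A_0\cap I_i)$, non-empty for cofinitely many $i$ since $A_0\in\filt$. For the converse, given a witness $\{J_i\}$ for $\filt^{(\omega)}$, write $N_i:=\{n:J_i\cap(\{n\}\times\omega)\neq\emptyset\}$ and $J_i^n:=\{l:(n,l)\in J_i\}$. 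I would test $\{J_i\}$ against the elements $G_F^{\bar n}:=\bigcup_{n\in\bar n}\{n\}\times F\cup\bigcup_{n\notin\bar n}\{n\}\times\omega\in\filt^{(\omega)}$, for $F\in\filt$ and finite $\bar n\subset\omega$: $G_F^{\bar n}\cap J_i=\emptyset$ is equivalent to $N_i\subseteq\bar n$ together with $F\cap J_i^n=\emptyset$ for every $n\in N_i$. Since every $N_i$ is finite, $\bigcup_M\{i:N_i\subseteq[0,M]\}=\omega$, so for $\bar n$ large enough $\{i:N_i\subseteq\bar n\}$ is infinite. For such $\bar n$ we obtain that every $F\in\filt$ meets $\pi_2(J_i):=\bigcup_{n\in N_i}J_i^n$ for cofinitely many $i$ with $N_i\subseteq\bar n$, and that these finite subsets of $\omega$ have overlap bounded by $|\bar n|$. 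A greedy disjointification should then yield a partition of $\omega$ witnessing meagerness of $\filt$; I expect this final step---turning a bounded-overlap cover into an honest partition of $\omega$, controlling the residue missed by the disjointified subfamily---to be the main technical obstacle.

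For (b), $\filt$ is a $P$-filter iff every countable subfamily admits a pseudo-intersection. Forward direction: given $\{G_m\}_{m<\omega}\subseteq\filt^{(\omega)}$ with $G_m=\bigcup_n\{n\}\times A_{m,n}$, apply the $P$-filter property of $\filt$ to each "column" $\{A_{m,n}:m<\omega\}$ to obtain $C_n\in\filt$ with $C_n\setminus A_{m,n}$ finite for all $m$. Set $B_n:=C_n\cap\bigcap_{m\le n}A_{m,n}\in\filt$; then $B_n\subseteq A_{m,n}$ exactly when $m\le n$, while $B_n\setminus A_{m,n}\subseteq C_n\setminus A_{m,n}$ is finite for $m>n$. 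The element $G:=\bigcup_n\{n\}\times B_n\in\filt^{(\omega)}$ therefore satisfies $G\setminus G_m=\bigcup_{n<m}\{n\}\times(B_n\setminus A_{m,n})$, a finite union of finite sets, so $G$ pseudo-contains every $G_m$. Reverse direction: given $\{F_k\}_{k<\omega}\subseteq\filt$, form $G_k:=\{0\}\times F_k\cup\bigcup_{n\ge1}\{n\}\times\omega\in\filt^{(\omega)}$; a pseudo-intersection $G=\bigcup_n\{n\}\times B_n\in\filt^{(\omega)}$ of the $G_k$ forces $G\setminus G_k=\{0\}\times(B_0\setminus F_k)$ to be finite, so $B_0\in\filt$ is the desired pseudo-intersection of $\{F_k\}$.
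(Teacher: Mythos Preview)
Your arguments for (b) are both correct. The reverse direction (\emph{$\filt^{(\omega)}$ a $P$-filter $\Rightarrow$ $\filt$ a $P$-filter}) is a single-column variant of the paper's argument: the paper uses the ``constant'' sets $B_n=\omega\times A_n$, obtains a pseudointersection $B\in\filt^{(\omega)}$, and then takes the projection $A=\{k:\exists m\ \langle m,k\rangle\in B\}$; your choice $G_k=\{0\}\times F_k\cup\bigcup_{n\ge 1}\{n\}\times\omega$ isolates everything in column $0$ and reads off $B_0$ directly. Both work. Your forward direction of (b) is correct as written and actually goes beyond the paper, which simply cites Shelah for that implication.

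For (a) the situation is similar but with an important caveat. What you call the ``easy direction'' ($\filt$ meager $\Rightarrow$ $\filt^{(\omega)}$ meager) is exactly the implication the paper proves; your Talagrand-based argument with the diagonal partition $J_i=\bigsqcup_{n+k=i}\{n\}\times I_k$ is correct. The paper instead gives a two-line topological argument: if $X\subset 2^\omega$ is meager then $X^\omega$ is meager (write $X=\bigcup_n F_n$ with each $F_n$ closed nowhere dense, then $X^\omega=\bigcup_n F_n\times X^{\omega\setminus\{0\}}$), and $\filt^{(\omega)}\approx\filt^\omega$. This is shorter and avoids Talagrand entirely, but your combinatorial route is a legitimate alternative.

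The direction you flag as hard ($\filt^{(\omega)}$ meager $\Rightarrow$ $\filt$ meager) is the one the paper does \emph{not} prove: it is cited to Shelah (via \cite{hg-hr-CDH}). So strictly speaking you do not need it here. Since you attempted it: your analysis up to the bounded-overlap family $\{\pi_2(J_i):N_i\subseteq\bar n\}$ is correct, and your instinct that the disjointification is the crux is right. A naive greedy disjointification can kill infinitely many sets entirely (each point lies in at most $|\bar n|$ sets, but that does not prevent the later sets from being swallowed by earlier ones), and the $\pi_2(J_i)$ need not cover $\omega$. One clean way to finish is to abandon partitions and use the finite-to-one form of Talagrand: group the indices $i$ with $N_i\subseteq\bar n$ into consecutive blocks of size $|\bar n|+1$ and take the union of $\pi_2(J_i)$ over each block; the bounded overlap guarantees these unions still form a family that every $F\in\filt$ meets cofinitely often, and now each point of $\omega$ lies in at most $|\bar n|$ of the blocks, so one of the $|\bar n|+1$ residue classes of blocks already gives a pairwise disjoint subfamily with the required property. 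Without some device of this kind the step is a genuine gap.
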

 \begin{proof}
  The forward implication of both (a) and (b) was proved by Shelah \cite[Fact 4.3, p. 327]{shelah-proper} (see also \cite[Lemma 2.5]{hg-hr-CDH} for a proof). So we only have to prove the converse implications.
  
  First, notice that if $X\subset\cantor$ is a meager space, then $X\sp\omega$ is also meager. Write $X$ as a union of countably many closed and nowhere dense sets $X=\bigcup\{F_n:n<\omega\}$. Then $X\sp\omega=\bigcup\{F_n\times X\sp{\omega\setminus\{0\}}:n<\omega\}$ where each $F_n\times X\sp{\omega\setminus\{0\}}$ is closed and nowhere dense in $X\sp\omega$. From this, it follows that if $\filt\sp{(\omega)}$ is non-meager, then $\filt$ is non-meager as well.
  
  Now, assume that $\filt\sp{(\omega)}$ is a $P$-filter. Let $\{A_n:n<\omega\}\subset\filt$; we wish to obtain a pseudointersection. For each $n<\omega$, let $B_n=\omega\times A_n$; this is an element of $\filt\sp{(\omega)}$. Let $B$ be a pseudointersection of $\{B_n:n<\omega\}$. Now, define $$A=\{k\in\omega:\exists m\in\omega\ (\pair{m,k}\in B)\}.$$ Notice that $B\subset \omega\times A$. Thus, $\omega\times A\in\filt\sp{(\omega)}$ and this implies that $A\in\filt$. We will prove that $A$ is a pseudointersection of $\{A_n:n<\omega\}$. 
  
  Fix $n<\omega$. Now, consider $k\in A\setminus A_n$. Then there is a minimal $m_k<\omega$ with $\pair{m_k,k}\in B$ and also $\pair{m,k}\notin B_n$ for all $m<\omega$. Thus, $\pair{m_k,k}\in B\setminus B_n$. Since there are only finitely many elements in $B\setminus B_n$, there are only many finitely elements in $A\setminus A_n$. This proves that $A$ is indeed the pseudointersection we were looking for.
 \end{proof}
 
 \begin{thm}\label{Cp-zerodim}
  Let $\filt\subset\powerw$ be a free filter. Then $C_p(\xi(\filt),\cantor)$ is homeomorphic to $\filt\sp{\omega}$.
 \end{thm}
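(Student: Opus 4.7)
The plan is to exhibit a bijection between $C_p(\xi(\filt),\cantor)$ and $\cantor\times\filt^{(\omega)}$, verify it is a homeomorphism, and then combine the identification $\filt^{(\omega)}\approx\filt^\omega$ (pointed out before the statement of Lemma \ref{powerfilter}) with Lemma \ref{lemmasum} to absorb the $\cantor$ factor.

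A function $f:\xi(\filt)\to\cantor$ is continuous if and only if it is continuous at $\infty$, and inspecting the subbasic clopen neighborhoods of $f(\infty)$ in $\cantor$ shows that this is equivalent to requiring that for every $k<\omega$ the \emph{agreement set}
$$
A_k(f)=\{n<\omega : f(n)(k)=f(\infty)(k)\}
$$
belongs to $\filt$. This suggests defining
$$
\Phi : C_p(\xi(\filt),\cantor)\to \cantor\times\filt^{(\omega)},\qquad \Phi(f)=\left(f(\infty),\bigcup_{k<\omega}\{k\}\times A_k(f)\right).
$$
The map $\Phi$ is a bijection: the inverse sends $(x,S)$ with $S=\bigcup_k\{k\}\times A_k$ to the function $g$ with $g(\infty)=x$ and $g(n)(k)=x(k)$ if $n\in A_k$ and $g(n)(k)=1-x(k)$ otherwise, and such a $g$ is continuous at $\infty$ precisely because each $A_k\in\filt$.

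Both spaces sit naturally as subspaces of products of copies of $2$, so to see that $\Phi$ is a homeomorphism it suffices to translate subbasic clopen conditions back and forth. The condition $f(\infty)(k)=\epsilon$ matches $x(k)=\epsilon$ directly, while the condition $f(n)(k)=\epsilon$ for an isolated $n$ corresponds to the clopen set on which either $x(k)=\epsilon$ and $(k,n)\in S$, or else $x(k)=1-\epsilon$ and $(k,n)\notin S$. Reversing this dictionary yields the continuity of $\Phi^{-1}$.

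Finally, recalling that $\filt^{(\omega)}$ is homeomorphic to $\filt^\omega$ and applying Lemma \ref{lemmasum} coordinate-wise,
$$
\filt^\omega\approx (\filt\times 2)^\omega\approx \filt^\omega\times\cantor,
$$
one concludes $C_p(\xi(\filt),\cantor)\approx\cantor\times\filt^{(\omega)}\approx\cantor\times\filt^\omega\approx\filt^\omega$. I do not anticipate a serious obstacle: the only point requiring care is the continuity check for $\Phi^{-1}$, where the formula $g(n)(k)=x(k)\oplus[n\notin A_k]$ mixes the two factors, but because the bit-flip is clopen in each factor separately the verification goes through without genuine difficulty.
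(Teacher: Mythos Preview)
Your argument is correct and follows essentially the same route as the paper: separate the value $f(\infty)\in\cantor$ from the coordinate-wise agreement data, which lands in $\filt^{(\omega)}\approx\filt^\omega$, and then invoke Lemma \ref{lemmasum} to absorb the extra $\cantor$ factor. The only difference is organizational: the paper first passes through the exponential identification $C_p(\xi(\filt),\cantor)\approx(C_p(\xi(\filt),2))^\omega$ and handles one bit at a time (getting $C_p(\xi(\filt),2)\approx\filt\times 2$ and hence $(\filt\times 2)^\omega\approx\filt^\omega$), whereas you build a single explicit homeomorphism $\Phi$ onto $\cantor\times\filt^{(\omega)}$ all at once.
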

  \begin{proof}
  First, notice that $C_p(\xi(\filt),\cantor)$ is homeomorphic to $(C_p(\xi(\filt),2))\sp\omega$. Moreover, the function that sends each $f\in C_p(\xi(\filt),2)$ to the function $x \mapsto 1-f(x)$ is a homeomorphism. Thus, we may only consider functions that send the limit point $\filt$ to $1$:
  $$
  (C_p(\xi(\filt),2))\sp\omega\approx(\{f\in C_p(X,2):f(\filt)=1\}\times 2)\sp\omega\textrm{.}
  $$
  Each function in $C_p(\xi(\filt),2)$ is completely defined by its restriction to $\omega$. The restriction of the set of functions $\{f\in C_p(\xi(\filt),2):f(\filt)=1\}$ is then the set of characteristic functions of elements of $\filt$, which is homeomorphic to $\filt$ as a subset of the Cantor set. Then $C_p(\xi(\filt),2\sp\omega)\approx (\filt\times 2)\sp\omega$ and Lemma \ref{lemmasum} implies that $C_p(\xi(\filt),2\sp\omega)\approx \filt\sp\omega$.    
 \end{proof}
 
 Applying Lemma \ref{powerfilter} and Theorem \ref{char-nonmeagerP}, we obtain the following result. 
 
 \begin{coro}
   Let $\filt\subset\powerw$ be a free filter. Then $C_p(\xi(\filt),\cantor)$ is \CDH{} if and only if $\filt$ is a non-meager $P$-filter.
 \end{coro}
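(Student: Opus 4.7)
The plan is essentially to chain together the three preceding results: Theorem \ref{Cp-zerodim}, Lemma \ref{powerfilter}, and Theorem \ref{char-nonmeagerP}. No new combinatorics or topology is needed; the statement follows by transporting the \CDH{} characterization of filters across the homeomorphism $C_p(\xi(\filt),\cantor) \approx \filt^\omega$, after identifying $\filt^\omega$ with the filter $\filt^{(\omega)}$ on $\omega\times\omega$.

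Concretely, I would proceed as follows. First, invoke Theorem \ref{Cp-zerodim} to replace $C_p(\xi(\filt),\cantor)$ by $\filt^\omega$, and recall from the paragraph preceding Lemma \ref{powerfilter} that $\filt^\omega$ with the product topology is naturally homeomorphic to $\filt^{(\omega)}$, viewed as a subspace of $2^{\omega\times\omega}\cong\cantor$. Since $\filt^{(\omega)}$ is itself a free filter on the countable set $\omega\times\omega$, Theorem \ref{char-nonmeagerP} applies directly to it and gives that $\filt^{(\omega)}$ is \CDH{} if and only if it is a non-meager $P$-filter. Finally, apply both parts of Lemma \ref{powerfilter} to translate these two conditions on $\filt^{(\omega)}$ back to the corresponding conditions on $\filt$: $\filt^{(\omega)}$ is non-meager iff $\filt$ is, and $\filt^{(\omega)}$ is a $P$-filter iff $\filt$ is.

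Combining these equivalences gives the corollary. I do not expect any real obstacle here; the only point that requires a moment of care is checking that Theorem \ref{char-nonmeagerP} may be legitimately applied to the filter $\filt^{(\omega)}$ on $\omega\times\omega$ rather than on $\omega$, but this is immediate after fixing any bijection $\omega\times\omega\to\omega$ (such a bijection induces a homeomorphism of the corresponding Cantor sets and preserves both the non-meager and $P$-filter properties). So the entire proof is a short bookkeeping argument assembling the earlier results.
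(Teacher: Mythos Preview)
Your proposal is correct and follows exactly the route indicated by the paper, which simply states that the corollary follows by applying Lemma \ref{powerfilter} and Theorem \ref{char-nonmeagerP} (together with Theorem \ref{Cp-zerodim}). Your extra remark about transporting Theorem \ref{char-nonmeagerP} to filters on $\omega\times\omega$ via a bijection is a harmless clarification of a point the paper leaves implicit.
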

 
 Now we turn back to the original problem. The rest of our paper will consist on giving a proof of the following result, which is a more explicit restatement of Theorem \ref{thm-main}.
 
 \begin{thm}\label{main-thm}
  Let $\filt\subset\powerw$ be a free filter. Then $C_p(\xi(\mathcal{F}))$ is \CDH{} if and only if $\filt$ is a  a non-meager $P$-filter.
 \end{thm}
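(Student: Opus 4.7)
The plan is to prove the two directions separately, with sufficiency carrying the bulk of the work.

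\emph{Necessity.} Suppose $C_p(\xi(\filt))$ is \CDH{}. By Theorem \ref{Cp-facts}(c) it is metrizable, and as a homogeneous separable metrizable \CDH{} space it is either Baire or a $\lambda$-set; the latter is ruled out because $C_p(\xi(\filt))$ contains copies of $\R$. Hence $C_p(\xi(\filt))$ is Baire and Lemma \ref{Cp_Baire_char} forces $\filt$ to be non-meager. For the $P$-filter condition I would argue by contrapositive: assuming $\{F_n:n<\omega\}\subset\filt$ has no pseudointersection in $\filt$, build two countable dense sets $D,E\subset C_p(\xi(\filt))$ that no autohomeomorphism can exchange. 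A natural first attempt is to take $D$ as the $\Q$-algebra generated by characteristic functions of cofinite subsets of $\omega$ (whose oscillation at $\infty$ is trivial) and to build $E$ so that the sequence $\{F_n\}$ is encoded in the oscillation rates of its elements near $\infty$; the homeomorphism-invariant distinguishing the two should parallel the invariant used in Theorem \ref{char-nonmeagerP} for $\filt^\omega$, transported through the relationship between $C_p(\xi(\filt))$ and $C_p(\xi(\filt),\cantor)\approx\filt^\omega$ established in Theorem \ref{Cp-zerodim}.

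\emph{Sufficiency.} Assume $\filt$ is a non-meager $P$-filter and let $D=\{d_n:n<\omega\}$, $E=\{e_n:n<\omega\}$ be countable dense subsets of $C_p(\xi(\filt))$. I would construct a homeomorphism $h\colon C_p(\xi(\filt))\to C_p(\xi(\filt))$ with $h[D]=E$ by a back-and-forth process producing, at stage $n$, two finite collections $\cube_n,\cube_n'$ of pairwise disjoint closed cubes in the ambient $\R^{\omega\cup\{\infty\}}$ (rectangular neighborhoods constrained on a finite support $S_n\subset\omega\cup\{\infty\}$) together with a bijection $\cube_n\to\cube_n'$ realized by a piecewise-affine homeomorphism on each matched pair. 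The inductive demands are that stage $n+1$ refines stage $n$, that the mesh is below $2^{-n}$, and that $d_0,\ldots,d_n$ have been sent to prescribed targets in $E$ while $e_0,\ldots,e_n$ are in the image. The $P$-filter property is invoked to collapse the countably many support obligations accumulated through the construction into a single element of $\filt$ almost contained in all of them, while non-meagerness guarantees that at each stage the family of admissible supports is non-meager in $\cantor$ and so nonempty.

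The principal obstacle will be the extension step: given a matched pair $(Q,Q')\in\cube_n\times\cube_n'$ and a prescribed new matching $d\mapsto e$ with $d\in Q\cap D$ and $e\in Q'\cap E$, subdivide $(Q,Q')$ into finitely many matched sub-cube pairs at mesh $\leq 2^{-n-1}$, with $(d,e)$ sitting in one matched pair, using only coordinate indices drawn from $\filt$. This is exactly where both hypotheses bite: the $P$-property supplies a single finite coordinate extension of $S_n$ handling all obligations simultaneously, while non-meagerness keeps the allowed extensions from being swallowed by the meager set of ``bad'' supports forced by continuity at $\infty$. Once this extension lemma is in hand, iterating through the back-and-forth and unioning the affine pieces on matched cubes yields $h$; the mesh-to-zero condition and the affine nature of the pieces give continuity on all of $C_p(\xi(\filt))$, while the back-and-forth bookkeeping guarantees $h[D]=E$.
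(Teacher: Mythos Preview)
Your necessity argument for non-meagerness matches the paper. For the $P$-filter half, however, the paper does \emph{not} attempt a direct contrapositive construction of two inequivalent dense sets. Instead it proves (Proposition \ref{semiMiller}) that when $\filt$ is non-meager, $C_\filt$ contains a particular countable dense set $D$ in which every crowded subset has a crowded subset with compact closure; \CDH{} then spreads this ``Miller property'' to all of $C_p(\xi(\filt))$, and since $\filt$ embeds as a closed subspace, $\filt$ itself has the Miller property, which by \cite{kunen-medini-zdomskyy} characterizes non-meager $P$-filters. Your proposed route---encoding $\{F_n\}$ in oscillation rates and ``transporting the invariant through Theorem \ref{Cp-zerodim}''---has a gap: Theorem \ref{Cp-zerodim} concerns $C_p(\xi(\filt),\cantor)$, a zero-dimensional space not homeomorphic to the connected $C_p(\xi(\filt))$, so there is no direct transport of homeomorphism-invariants between them. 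You would need a genuinely new invariant living in $C_p(\xi(\filt))$ itself, and none is supplied.

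Your sufficiency sketch misses the two technical ideas that do the real work in the paper. First, the paper does not build $h$ as a single back-and-forth limit; it builds an entire $\filt^{(\omega)}$-\emph{tree} of finite-stage approximations and then invokes Laflamme's combinatorial characterization (Lemma \ref{treelemma}): a non-meager $P$-filter is exactly one in which every $\filt$-tree of finite sets has a branch whose union lies in $\filt$. That branch is what produces the witness $z\in\filt^{(\omega)}$ needed in Lemma \ref{homeo-restricts2} to certify $h[K_\filt]=K_\filt$. Your plan to ``collapse countably many support obligations into a single $\filt$-element'' is just the $P$-property and is not enough; the non-meager condition enters precisely through the tree lemma, not merely as ``admissible supports are non-meager.'' Second, the paper cannot work coordinate-by-coordinate in $[-1,1]$: the statement $(\ast)_1$ (extend a finite partial matching to a small homeomorphism of $[-1,1]$) is \emph{false} because interval homeomorphisms preserve or reverse order. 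The paper therefore groups coordinates in triples $J_n=[-1,1]^3$ and uses $(\ast)_3$, which is true. Your piecewise-affine cube scheme, with matchings realized on finite supports, runs straight into this obstruction: at the extension step you cannot in general realize an arbitrary finite bijection $\pi_i[D_0]\to\pi_i[E_0]$ by a small homeomorphism of a single interval factor, so the refinement you describe will fail. Without the tree mechanism and the dimension-three trick, the argument does not close.
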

 
 \begin{coro}\label{solution-tkachuk_question}
  There are models of \textrm{ZFC} in which there exists a countable infinite space $X$ with a unique non-isolated point such that $C_p(X)$ is  \CDH{}.
 \end{coro}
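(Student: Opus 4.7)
The plan is to combine Theorem \ref{main-thm} with the classical consistency of the existence of $P$-points. By Theorem \ref{main-thm}, it suffices to exhibit, in some model of ZFC, a free non-meager $P$-filter $\filt \subset \powerw$: the space $X = \xi(\filt)$ will then be, by construction, a countable infinite space with a unique non-isolated point, and $C_p(X)$ will be CDH.

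To produce such a filter I would use any $P$-point, that is, any free ultrafilter $\mathcal{U} \subset \powerw$ in which every countable subfamily has a pseudointersection belonging to $\mathcal{U}$. By definition a $P$-point is in particular a $P$-filter, so only non-meagerness remains to be verified. Note that the involution $A \mapsto \omega \setminus A$ is a self-homeomorphism of $\cantor$ that carries $\mathcal{U}$ exactly onto $\powerw \setminus \mathcal{U}$, because for an ultrafilter $A \in \mathcal{U}$ iff $\omega \setminus A \notin \mathcal{U}$. Thus $\cantor$ is the disjoint union of $\mathcal{U}$ with a homeomorphic copy of itself, and if $\mathcal{U}$ were meager, both pieces would be, violating the Baire category theorem in $\cantor$. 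Hence every $P$-point is a non-meager $P$-filter.

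Finally I would cite the classical result of W.~Rudin that $P$-points on $\omega$ exist under the Continuum Hypothesis (built by a standard transfinite recursion that enumerates all countable subfamilies of $\powerw$, pseudointersects each one, and extends to an ultrafilter at the end). This yields a model of ZFC in which a non-meager $P$-filter exists, and Theorem \ref{main-thm} then produces the required space $X$. There is no genuine obstacle here: all the mathematical depth lies in Theorem \ref{main-thm} itself, and the corollary is a brief assemblage of the reduction just above together with well-known facts about $P$-points and Baire category on the Cantor set.
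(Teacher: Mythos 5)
Your proposal is correct and takes essentially the same route as the paper: reduce, via Theorem \ref{main-thm}, to the consistent existence of a free non-meager $P$-filter, and then take $X=\xi(\filt)$. The only difference is that the paper simply cites \cite{bart} for models with a non-meager $P$-filter, whereas you instantiate one concretely with a Rudin $P$-point under CH together with the classical Baire-category argument that no free ultrafilter is meager; both justifications are valid.
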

 \begin{proof}
  According to \cite[section 4.4.C]{bart} there are models of \textrm{ZFC} in which there exists a non-meager $P$-filter $\filt$. By Theorem \ref{main-thm}, the space $X=\xi(\filt)$ is as promised.  
 \end{proof}
 
 Once we prove Theorem \ref{main-thm}, the space $X$ from Corollary \ref{solution-tkachuk_question} will be the first consistent example of a metrizable, non-Polish, Baire, arcwise connected, infinite-dimensional, \CDH{} space.
 
 Before we close this section, we include other definitions that we will need for the proof of Theorem \ref{main-thm}.
 
 The \emph{Hilbert cube} is the countable infinite product of closed intervals of the reals; we will find it convenient to work with $\cube=[-1,1]\sp\omega$. The \emph{pseudointerior} of $\cube$ is $\cubein=(-1,1)\sp\omega$ and the \emph{pseudoboundary} is $B(\cube)=\cube\setminus \cubein$. Consider the set of functions in the Hilbert cube that $\filt$-converge to $0$:
 $$
 K_\filt=\{f\in\cube:\forall m\in\omega\ \{n\in\omega:\lvert f(n)\rvert<2\sp{-m}\}\in\filt\},
 $$
 and those that only take values in the pseudointerior
 $$
 C_\filt=K_\filt\cap\cubein.
 $$
 
 From \cite[Lemma 2.1]{marciszewski-analytic_coanalytic}, it easily follows that $C_\filt$ is homeomorphic to $C_p(\xi(\filt))$. Thus, instead of working with the pair of spaces $C_p(\xi(\filt))\subset\R\sp{\xi(\filt)}$, we will work with $C_\filt\subset\cubein$.
 
 A family of closed, non-empty subsets $\A=\{A_s:s\in 2\sp{<\omega}\}$ of a compact space $X$ will be called a \emph{Cantor scheme} on $X$ if 
 \begin{itemize}
  \item[(a)] for every $s\in2\sp{<\omega}$, $A_s=A_{s\sp\frown0}\cup A_{s\sp\frown1}$, and
  \item[(b)] for every $f\in2\sp\omega$, $\bigcap\{A_{f\restriction_n}:n<\omega\}$ is exactly one point.
 \end{itemize}
 Let $\mathbf{1}\in 2\sp\omega$ be the constant function with value $1$ and consider the set 
 $$\sigma=\{x\in2\sp{<\omega}:\forall i\in{dom}(s)\ [s(i)=1]\}$$
 of all finite sequences that are either constant equal to $1$ or empty.

\section{Proof of necessity in Theorem \ref{main-thm}}

In this section, we prove that if $C_p(\xi(\filt))$ is \CDH{}, then the filter $\filt$ must be a non-meager $P$-filter.

In \cite[Theorem 10]{kunen-medini-zdomskyy}, the authors give several topological characterizations of non-meager $P$-filters. One of then is what they call the Miller property: A space $X$ has the \emph{Miller property} if whenever $Q\subset X$ is crowded and countable, there exists a crowded subset $C\subset Q$ such that $C$ has compact closure in $X$.

 \begin{lemma}\cite[Lemma 7]{kunen-medini-zdomskyy}\label{miller-filters}
  Let $\filt\subset\powerw$ be a free filter that is non-meager. Let $P$ be a countable crowded subspace of $\filt$ such that $P$ has a pseudointersection in $\filt$. Then there exists a crowded $Q\subset P$ and $z\in\filt$ such that for every $x\in Q$, $z\subset x$.
 \end{lemma}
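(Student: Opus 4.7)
The plan is to combine Talagrand's characterization of non-meagerness (a free filter $\filt\subseteq\powerw$ is non-meager iff for every interval partition $\{I_k:k<\omega\}$ of $\omega$ there exists $A\in\filt$ with $A\cap I_k=\emptyset$ for infinitely many $k$) with a fusion construction inside $P$ that exploits its crowdedness. I would build a tree $T\subseteq\omega^{<\omega}$ together with pairwise distinct $q_s\in P$ for $s\in T$ and an increasing sequence $M_0<M_1<\cdots$ of natural numbers, organising the construction into stages: at stage $0$ place the root $\emptyset$; at each stage $k\geq 1$, give every node already placed a single new child. With this dovetailing, every node $s$ receives one child at every stage $k>k(s)$, where $k(s)$ is the stage at which $s$ was itself introduced.

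Take $q_\emptyset\in P$ arbitrary and $M_0>\max(w\setminus q_\emptyset)$. At stage $k\geq 1$, for each previously chosen $q_t$, using that $P$ is crowded in $\cantor$ (so $q_t$ is a non-isolated point of $P$), pick a new element $q_s\in P$, distinct from every element already chosen, agreeing with $q_t$ on $[0,M_{k-1})$, and declare $s$ to be the new child of $t$ in $T$; once all new children have been placed, let $M_k$ exceed every element of $w\setminus q_s$ for every $s$ placed through stage $k$. Because $q_s$ and its parent $q_t$ agree on $[0,M_{k-1})$, the ``new defect'' $G_s:=(w\setminus q_s)\setminus(w\setminus q_t)$ is trapped in the interval $I_k:=[M_{k-1},M_k)$, and an induction on the tree gives $w\setminus q_s=\bigcup_{t\preceq s}G_t$, with $G_\emptyset:=w\setminus q_\emptyset\subseteq[0,M_0)=:I_0$. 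Moreover, as $M_{k-1}\to\infty$, each $q_s$ is in $\cantor$ the limit of its sequence of children (indexed by the stage at which each was born).

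Applying Talagrand's characterization to the partition $\{I_k:k<\omega\}$ yields $A\in\filt$ and infinite $K\subseteq\{1,2,\dots\}$ with $A\cap I_k=\emptyset$ for every $k\in K$. Put $z:=A\cap w\cap[M_0,\infty)\in\filt$ and
$$
S_K:=\{s\in T:k(t)\in K\text{ for every nonempty }t\preceq s\},\qquad Q:=\{q_s:s\in S_K\}.
$$
For $s\in S_K$ one has $z\cap G_\emptyset\subseteq z\cap[0,M_0)=\emptyset$ and $z\cap G_t\subseteq A\cap I_{k(t)}=\emptyset$ for every nonempty $t\preceq s$, so $z\cap(w\setminus q_s)=\emptyset$, i.e.\ $z\subseteq q_s$. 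The set $Q$ is crowded: the dovetailing ensures that every $s\in S_K$ has a child born at every stage $k>k(s)$, hence infinitely many of its children lie in $S_K$ (precisely those born at stages in $K$), and these converge to $q_s$ in $\cantor$.

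The main obstacle is the propagation of defects down the tree. To make each $q_s$ a genuine limit of its descendants, one must force every descendant of $s$ to agree with $q_s$ on a very long initial segment, which forces the descendant to inherit the entire finite set $w\setminus q_s$; a direct bound on $\bigcup_{s\in T}(w\setminus q_s)$ inside $w$ is therefore hopeless. The fix is to track only the \emph{incremental} defect $G_s$, which by the agreement condition is confined to the single interval $I_{k(s)}$; non-meagerness of $\filt$ then provides, in one stroke, an $A$ that simultaneously kills every $G_t$ with $k(t)\in K$, and the combinatorial content of the argument is that thinning to the subtree $S_K$ preserves crowdedness precisely because the dovetailing spreads every node's children across all later stages.
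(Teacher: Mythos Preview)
The paper does not prove this lemma; it is quoted verbatim from Kunen--Medini--Zdomskyy and used as a black box. Your argument is correct and is essentially the standard one: Talagrand's characterization of non-meagerness supplies, for any interval partition, a filter element missing infinitely many blocks, and the fusion tree you build is arranged so that the pseudointersection $w$ controls all defects $w\setminus q_s$, localizing each incremental defect $G_s$ to a single block $I_{k(s)}$. The dovetailing (each node gets one child per later stage) is exactly what is needed so that thinning to stages in $K$ still leaves every surviving node with infinitely many surviving children, hence $Q$ remains crowded. The verification that $z=A\cap w\cap[M_0,\infty)$ is contained in every $q_s$ with $s\in S_K$ is clean.

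Two minor points worth making explicit for a polished write-up: first, the choice of $q_s$ agreeing with $q_t$ on $[0,M_{k-1})$ and distinct from all previously chosen points is possible because $P$ is crowded, so every basic clopen neighborhood of $q_t$ meets $P$ in an infinite set; second, the finiteness of $w\setminus q_s$ (needed to define $M_k$) is precisely the hypothesis that $w$ is a pseudointersection of $P$. Both are clearly intended, and the proof goes through.
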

 
 The following result is a function space version of the arguments given in \cite{kunen-medini-zdomskyy}.
 
  \begin{propo}\label{semiMiller}
  Let $\filt\subset\powerw$ be a free filter that is non-meager. There is a countable dense set $D\subset C_p(\xi(\filt))$ such that for every crowded $P\subset D$ there is a crowded subset $Q\subset P$ that has compact closure in $C_p(\xi(\filt))$.
 \end{propo}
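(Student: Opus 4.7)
The plan is to transport the problem via Marciszewski's homeomorphism \cite{marciszewski-analytic_coanalytic} to $C_\filt\subset\cube$, where a set $K\subset C_\filt$ has compact closure in $C_\filt$ iff its closure in the Hilbert cube is contained in $C_\filt=K_\filt\cap\cubein$. A convenient sufficient condition is (i) $\sup_{f\in K}|f(n)|<1$ for every $n\in\omega$, and (ii) there exists $z\in\filt$ with $f(n)=0$ for every $f\in K$ and $n\in z$. Indeed, (i) keeps pointwise limits inside $\cubein$, while (ii) forces $\{n:|g(n)|<2^{-m}\}\supseteq z\in\filt$ for every pointwise limit $g$ and every $m$, putting such a $g$ inside $K_\filt$.

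For the countable dense set $D\subset C_\filt$ I take all $\Q$-valued functions of finite support; density is routine. For each $f\in D$, $A_f:=\omega\setminus\mathrm{supp}(f)$ is cofinite, hence in $\filt$, so any subfamily $\{A_f:f\in P\}$ trivially admits a pseudointersection in $\filt$ — exactly the hypothesis of Lemma \ref{miller-filters}. Given a crowded $P\subset D$, my plan is to build by recursion on $2^{<\omega}$ a Cantor scheme $\{q_s:s\in 2^{<\omega}\}\subset P$ whose image $\{A_{q_s}:s\in 2^{<\omega}\}\subset\filt$ is also crowded in $\filt$. At each node I use crowdedness of $P$ near $q_{s\upharpoonright(|s|-1)}$ to pick $q_{s{}^\frown 0}$ and $q_{s{}^\frown 1}$ close to their parent in the pointwise topology, yet with supports made to differ at a freshly chosen coordinate, so that $A_{q_{s{}^\frown 0}}$ and $A_{q_{s{}^\frown 1}}$ are distinguishable in the Cantor topology on $\filt$. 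Applying Lemma \ref{miller-filters} to (a suitable crowded piece of) $\{A_{q_s}\}$ yields $z\in\filt$ and a crowded subfamily $\mathcal{B}$ with $z\subseteq A$ for every $A\in\mathcal{B}$; pulling back gives a crowded $Q\subset P$ with $\mathrm{supp}(f)\cap z=\emptyset$ for every $f\in Q$, establishing (ii).

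Condition (i) I intend to secure either by first replacing $P$ with $P\cap\prod_n(-1+2^{-n-1},\,1-2^{-n-1})$, which is relatively open in $P$ and therefore crowded whenever nonempty, or by tightening the Cantor scheme construction so that each $q_s$ satisfies $|q_s(n)|\le 1-2^{-n-1}$. The main technical obstacle I anticipate is the double bookkeeping required to keep the scheme Cantor in two different topologies simultaneously — the pointwise topology of $D$ and the Cantor topology of $\filt$ — since $f\mapsto A_f$ is not continuous between them and so crowdedness on one side does not transfer automatically to the other. A secondary issue is the degenerate case in which $P$ lies in a single fiber of $f\mapsto\mathrm{supp}(f)$: there $P$ is a crowded subset of $\Q^k\cap(-1,1)^k$ for some finite $k$, and compact closure in $C_\filt$ follows from a relative shrinking inside that finite-dimensional cube, so this case can be dispatched separately.
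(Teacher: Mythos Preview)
Your plan has a real gap, and you have already located it: the assignment $f\mapsto A_f=\omega\setminus\mathrm{supp}(f)$ is neither injective nor continuous between the pointwise topology on $D$ and the Cantor topology on $\filt$, and this breaks the argument in both directions. Forward: crowdedness of $P$ near $q_s$ only gives you $q$ with $|q(n)-q_s(n)|$ small for $n<N$; when $q_s(n)=0$ you cannot force $q(n)=0$, so you cannot pin down $\mathrm{supp}(q)\cap M$ and hence cannot make $A_q$ Cantor-close to $A_{q_s}$. Backward: even if you somehow produced a crowded $\{A_{q_s}\}$ and Lemma~\ref{miller-filters} returned a crowded $\mathcal B$, nothing makes $\{q_s:A_{q_s}\in\mathcal B\}$ crowded pointwise, since functions with nearby (or equal) supports can be far apart in $\cube$. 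Your ``degenerate fiber'' case is only the extreme instance; in general $\{A_f:f\in P\}$ can be finite, or a convergent sequence, or otherwise lack any crowded subset, with no crowded piece of $P$ on which $f\mapsto A_f$ becomes a homeomorphism.

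The paper's proof repairs exactly this by encoding the \emph{values} of $f$, not merely its support. One first thins $P$ to a crowded $P'\subset\prod_n J_n$ with each $J_n\subset(-1,1)$ closed, then fixes on each $J_n$ a Cantor scheme in which the all-ones branch codes $0$; the induced map $\varphi:(2^\omega)^\omega\to\prod_n J_n$ is continuous and its restriction to $\varphi^{-1}[P']$ is a \emph{homeomorphism} onto $P'$. Because elements of $D$ vanish cofinitely, the preimage sits inside $\filt^{(\omega)}$ (not $\filt$), which is non-meager by Lemma~\ref{powerfilter}, and a cofinite-type pseudointersection is available. Now Lemma~\ref{miller-filters} applies in $\filt^{(\omega)}$, and crowdedness transfers back to $P'$ along the homeomorphism for free; the common lower bound $y\in\filt^{(\omega)}$ then yields, for each $n$, a set $y_n\in\filt$ controlling $|f(k)|$ uniformly over the closure, giving compactness in $C_\filt$. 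The move to $\filt^{(\omega)}$ is not a complication but precisely the device that makes the transfer of crowdedness legitimate.
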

 \begin{proof}
  We will work with $C_\filt$. We define $D$ to be the set of functions $f:\omega\to (-1,1)$ such that there is a finite set $s_f\in[\omega]\sp{<\omega}$ such that $f(x)\in \Q\setminus\{0\}$ if $x\in s_f$ and $f(x)=0$ if $x\notin s_f$. Clearly, $D$ is countable and dense in $\cubein$. Moreover, $D\subset C_\filt$ because every function in $D$ is equal to $0$ in a cofinite set.
  
  Let $P\subset D$ be crowded. First, we will find a sequence $\{J_n:n<\omega\}$ of closed subintervals of $[-1,1]$ and a crowded subset $P\sp\prime\subset P$ such that $P\sp\prime\subset\prod_{n<\omega}J_n\sp\circ$. We define these intervals and $P\sp\prime=\{p_n:n<\omega\}$ recursively. It is easy to construct a function $\psi:\omega\to\omega$ such that $\psi(n)<n$ if $n>0$ and $\psi\sp{-1}(n)$ is infinite for all $n<\omega$. The reason we are considering this function is to make $\{p_m:m<\omega,\psi(m)=n\}$ a sequence converging to $p_n$ for all $n<\omega$. Let $\rho$ be a metric in $\cubein$.
  
  For $n=0$, let $p_0\in P$ be arbitrary and let $J_0$ be any non-degenerate closed interval centered at $0$ such that $p_0(0)\in J_0\sp\circ$ and $J_0\subset(-1,1)$. So assume that $0<n<\omega$ and we have constructed $\{p_i:i<n\}\subset P$ such that
  \begin{itemize}
   \item if $i,j<n$, then $p_i(j)\in J_j\sp\circ$ and $J_j\subset(-1,1)$; and
   \item if $i,j<n$ and $i=\psi(j)$, then $\rho(p_i,p_j)<1/(j+1)$.
  \end{itemize}
  So we need to choose $p_n$ and $J_n$. The open set $U_n=J_0\sp\circ\times\cdots\times J_{n-1}\sp\circ\times(-1,1)\sp{\omega\setminus n}$ contains $\{p_i:i<n\}$. Then, choose $p_n\in J_0\sp\circ\times\cdots\times J_{n-1}\sp\circ\times(-1,1)\sp{\omega\setminus n}$ such that $\rho(p_{\psi(n)},p_n)<1/(n+1)$. Finally, let $J_{n+1}\subset(-1,1)$ be a closed interval centered at $0$ that contains $\{p_i(n):i\leq n\}$ in its interior. This completes the construction of $P\sp\prime$.
  
 Fix $n<\omega$ for the moment. It is not hard to construct a centered scheme $\A\sp{n}=\{A_s\sp{n}:s\in2\sp{<\omega}\}$ with the following properties:
  \begin{enumerate}[label=(\roman*)]
   \item $A_\emptyset\sp{n}=J_n$,
   \item if $s\in\sigma$, then there are $x$ and $y$ with $0<x<y$, $A_s\sp{n}=[-y,y]$, $A_{s\sp\frown1}\sp{n}=[-x,x]$ and $A_{s\sp\frown0}\sp{n}=[-y,-x]\cup[x,y]$,
   \item if $s\in\sigma$, then there are $x$ and $y$ with $0<x<y$, $A_{s\sp\frown\pair{0,0}}\sp{n}=[-y,-x]$ and $A_{s\sp\frown\pair{0,1}}\sp{n}=[x,y]$\
   \item if $s\in2\sp{<\omega}\setminus\sigma$ and there are $a$ and $b$ with $a<b$ and $A_s\sp{n}=[a,b]$, then there exists $x\in(a,b)$ such that $A_{s\sp\frown0}\sp{n}=[a,x]$ and $A_{s\sp\frown1}\sp{n}=[x,b]$,
   \item if $s\in\sigma\setminus\{\emptyset\}$, then $A_s\sp{n}$ has length less than $1/\mathrm{dom}(s)$,
   \item if $s\in2\sp{<\omega}$, $\{p_i(n):i<\omega\}$ is disjoint from the set of endpoints of $A_s\sp{n}$.  
  \end{enumerate}

  Then, define a continuous function $\varphi_n:2\sp\omega\to J_n$ such that $\varphi_n(x)$ is the unique point in the set $\bigcap_{i<\omega}{A_{x\restriction i}\sp{n}}$. Notice that the following properties hold.
  \begin{enumerate}[label=(\arabic*)]
   \item $\varphi_n(\mathbf{1})=0$, and
   \item for all $0<i<\omega$, $\varphi_n[\mathbf{1}\!\!\restriction_i]$ is a closed interval of length $<1/i$ that contains $0$ in its interior.
   \end{enumerate}
  
  Consider the continuous function $\varphi:(2\sp\omega)\sp\omega\to\prod_{n<\omega}{J_n}$ defined as 
  $$
  \varphi(x)=\{\pair{n,\varphi_n(x(n))}:n<\omega\}.
  $$
  It is easy to see that $|\varphi\sp{\leftarrow}{(p_i)}|=1$ for all $i<\omega$. 
  
  Let $A=\{x\in (2\sp\omega)\sp\omega:\varphi(x)\in P\sp\prime\}$. Then $\varphi\!\!\restriction_A:A\to P\sp\prime$ is a homeomorphism. Recall that $(2\sp\omega)\sp\omega$ is cannonically homeomorphic to $\mathcal{P}(\omega\times\omega)$ under the function
  $$
  f\mapsto\{\pair{n,m}\in\omega\times\omega:f(m)(n)=1\}.
  $$
  Let $B=\{x_n:n<\omega\}\subset\omega\times\omega$ be the element corresponding to $A$ under this homeomorphism. Since every element of $P\sp\prime$ is equal to $0$ in cofinitely many coordinates, it is easy to argue that for every $n<\omega$, $x_n$ has the property that there is $m_n<\omega$ such that $\omega\times(\omega\setminus m_n)\subset x_n$.
  
  Since $\filt$ contains all cofinite sets, $B\subset\filt\sp{(\omega)}$. By (a) in Lemma \ref{powerfilter}, $\filt\sp{(\omega)}$ is non-meager. Also, $B$ is crowded since it is homeomorphic to $P\sp\prime$. Notice that $z=\{\pair{n,k}:n<\omega,\ \forall i\leq n\ (m_i\leq k)\}$ is a pseudointersection of $B$ in $\filt\sp{(\omega)}$. By Lemma \ref{miller-filters}, there is a crowded subset $C\subset B$ and $y\in\filt\sp{(\omega)}$ such that $y\subset x$ for all $x\in C$.
  
  Now, we have to go back to $(\prod_{n<\omega}{J_n})\cap C_\filt$. Let $Q$ be the set that corresponds to $C$ under the homeomorphisms considered. Then $Q$ is a crowded subset of $P$ and we claim that it has compact closure in $C_\filt$. Since $\prod_{n<\omega}{J_n}$ is compact, the closure of $Q$ in $\cubein$ is compact. Then we only have to prove that the $\cubein$-closure of $Q$ is contained in $C_\filt$.
  
  For each $1\leq n<\omega$, let $y_n=\{k<\omega:\forall i\leq n (\pair{i,k}\in y)\}$. Since $y\in\filt\sp{(\omega)}$, it follows that $y_n\in\filt$ for all $n\leq \omega$. If $q\in Q$ and $k\in y_n$ for some $n<\omega$, then $q(k)\in \varphi_n[\mathbf{1}\!\!\restriction_{n}]$. So if $f$ is in the pointwise closure closure of $Q$, then $f(k)\in \varphi_k[\mathbf{1}\!\!\restriction_{n}]$ for $k\in y_n$. By condition (2),  $\varphi_k[\mathbf{1}\!\!\restriction_{n}]$ has diameter $<1/n$ and contains $0$. Thus, for all $f$ in the $\cubein$-closure of $Q$,
  $y_n\subset\{k<\omega:\lvert f(k) \rvert< 1/n)\}$, so $\{k<\omega:\lvert f(k) \rvert< 1/n\}\in\filt$. Since this holds for all $n<\omega$, it follows that $f\in C_\filt$. This completes the proof.
 \end{proof}

 \begin{proof}[Proof of necessity in Theorem \ref{main-thm}]
  Let $\filt\subset\powerw$ be a free filter such that $C_p(\xi(\filt))$ is \CDH{}. Since $C_p(\xi(\filt))$ contains a copy of $\R$, we can argue like in Proposition \ref{C_p-countable_metric} that it cannot be meager. Since $C_p(\xi(\filt))$ is homogeneous, it follows that it is a Baire space. By Lemma \ref{Cp_Baire_char}, we obtain that $\filt$ is non-meager.
  
  We will next prove that $C_p(\xi(\filt))$ has the Miller property. Let $Q\subset C_p(\xi(\filt))$ be crowded and countable. Let $D$ be the countable dense set from Proposition \ref{semiMiller}. Since $C_p(\xi(\filt))$ is\CDH, there is a homeomorphism $H:C_p(\xi(\filt))\to C_p(\xi(\filt))$ with $H[D\cup Q]=D$. Then, by the property of $D$, there is a crowded $C\subset H[Q]$ that has compact closure in $C_p(\xi(\filt))$. Then $H\sp{\leftarrow}[C]$ is a countable crowded subset of $Q$ with compact closure in $C_p(\xi(\filt))$.  
  
  Let us define $e:\filt\to C_p(\xi(\filt))$ in the following way. For every $A\in\filt$, $e(A):\xi(\filt)\to\R$ is the function such that $e(A)(\filt)=1$ and $e(A)\!\!\restriction_\omega:\omega\to 2$ is the characteristic function of $A$. Clearly, $e$ is an embedding and it is not hard to see that $e[\filt]$ is closed. Since the Miller property is clearly hereditary to closed subspaces, we conclude that $\filt$ has the Miller property. By the characterization of \cite[Theorem 10]{kunen-medini-zdomskyy}, this implies that $\filt$ is a non-meager $P$-filter.
 \end{proof}

\section{Proof of sufficiency in Theorem \ref{main-thm}}

In this section we prove the other direction of our main Theorem \ref{main-thm}. What we will actually do is to prove that $C_\filt$ is \CDH{} \emph{with respect to} $\cube$ whenever $\filt\subset\powerw$ is a non-meager $P$-filter. This means that given $D,E$ countable dense subsets of $C_\filt$, there is a homeomorphism $h:\cube\to\cube$ such that $h[D]=E$ and moreover, $h[C_\filt]=C_\filt$.

In order to achieve this, we will use the fact that the theory of homeomorphisms in compact metric spaces is well-understood. For a compact space $X$, $\homeo{X}$ denotes the set of all autohomeomorphisms of $X$. Given a metric $\rho$ on $X$ it is possible to define a metric on $\homeo{X}$ by $\rho(f,g)=\sup\{\rho(f(x),g(x))\colon x\in X\}$, when $f,g\in\homeo{X}$, which makes $\homeo{X}$ a complete metric space. This discussion can be found in \cite{vm-inf_dim_funct_spaces}. 

One of the advantages of considering $\homeo{X}$ as a topological space is that we can construct complicated homeomorphisms by taking limits of simpler homeomorphisms. The main tool that we use in this paper is the following result, which is called the \emph{Inductive Convergence Criterion}.

\begin{thm}\cite[1.6.2]{vm-inf_dim_funct_spaces}\label{inductive_conv}
  Let $\pair{X,\rho}$ be a compact metric space and $\{h_n:n<\omega\}\subset\homeo{X}$. Then, for every $n<\omega$ there exists a number $\epsilon_n>0$ that only depends on $\{h_k:k<n\}$ such that the following implication holds: if $\rho(h_n,\id{X})<\epsilon_n$ for all $n<\omega$, then $h=\lim_{n\to\infty}(h_n\circ\cdots\circ h_0)$ exists and belongs to $\homeo{X}$.
 \end{thm}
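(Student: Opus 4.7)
The plan is to choose the tolerances $\{\epsilon_n\}$ recursively so that both the partial compositions $g_n = h_n\circ\cdots\circ h_0$ and their inverses $g_n\sp{-1} = h_0\sp{-1}\circ\cdots\circ h_n\sp{-1}$ are Cauchy sequences in the complete metric space of continuous self-maps of $X$ (endowed with the sup metric, which is complete because $X$ is compact). The limits $h=\lim g_n$ and $k=\lim g_n\sp{-1}$ will then be continuous, and the remaining task will be to check that $h$ and $k$ are mutually inverse, so that $h\in\homeo{X}$.

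For the forward direction the key telescoping identity is
\[
\rho(g_{n+1},g_n) = \sup_{x\in X}\rho(h_{n+1}(g_n(x)),g_n(x)) = \rho(h_{n+1},\id{X}),
\]
where the second equality uses that $g_n$ maps $X$ bijectively onto itself. Consequently any choice with $\epsilon_n<2\sp{-n}$ already renders $\{g_n\}$ Cauchy. For the backward direction, from $g_{n+1}\sp{-1} = g_n\sp{-1}\circ h_{n+1}\sp{-1}$ we get
\[
\rho(g_{n+1}\sp{-1},g_n\sp{-1}) = \sup_{x\in X}\rho\bigl(g_n\sp{-1}(h_{n+1}\sp{-1}(x)),\,g_n\sp{-1}(x)\bigr),
\]
and compactness of $X$ supplies a $\delta_n>0$ — determined by $g_n$, hence by $h_0,\ldots,h_n$ — such that $\rho(u,v)<\delta_n$ forces $\rho(g_n\sp{-1}(u),g_n\sp{-1}(v))<2\sp{-(n+1)}$. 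Since $\rho(h_{n+1}\sp{-1},\id{X})=\rho(h_{n+1},\id{X})$, declaring $\epsilon_{n+1}=\min\{2\sp{-(n+1)},\delta_n\}$ also makes $\{g_n\sp{-1}\}$ Cauchy, as required, and by construction $\epsilon_{n+1}$ depends only on $h_0,\ldots,h_n$.

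To finish I would verify $h\circ k=\id{X}$ by a diagonal argument: fix $x\in X$ and set $y=k(x)$, $y_n=g_n\sp{-1}(x)$, so that $y_n\to y$. Uniform convergence $g_n\to h$ gives $|g_n(y_n)-h(y_n)|\le\rho(g_n,h)\to 0$, while continuity of $h$ gives $h(y_n)\to h(y)$. Combining these with the identity $g_n(y_n)=x$ yields $h(k(x))=x$, and the symmetric computation shows $k(h(x))=x$.

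The main obstacle is precisely the recursive nature of the backward estimate: the modulus of uniform continuity of $g_n\sp{-1}$ can deteriorate as $n$ grows (there is no uniform bound across the sequence), so $\epsilon_{n+1}$ must be chosen \emph{after} $h_0,\ldots,h_n$ are fixed rather than in advance. This is exactly the flexibility granted by the statement, and it is the reason one cannot simply take $\epsilon_n=2\sp{-n}$ from the outset.
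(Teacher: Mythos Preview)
The paper does not supply its own proof of this theorem; it is quoted from van Mill's monograph \cite{vm-inf_dim_funct_spaces} and invoked as a black box throughout Section~5. Your argument is correct and is the standard one: force both $\{g_n\}$ and $\{g_n\sp{-1}\}$ to be Cauchy in the complete space $(C(X,X),\rho)$ by choosing $\epsilon_{n+1}$ below the modulus of uniform continuity of $g_n\sp{-1}$ at scale $2\sp{-(n+1)}$, and then verify via the diagonal estimate that the two uniform limits are mutual inverses. One cosmetic point: you leave $\epsilon_0$ unspecified, but any positive value suffices since neither Cauchy condition places a constraint on the zeroth step.
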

 
 Thus, when we construct a homeomorphism by recursion, at every step $n<\omega$ we have to consider the value of $\epsilon_n$ in order to choose $h_n$. However, in our constructions we will not worry about the precise value of this $\epsilon _n$ and simply carry out the construction.
 
 Another ingredient we need is some criterion that will ensure that a homeomorphism $h\in\homeo{\cube}$ will be such that $h[C_\filt]=C_\filt$.
 
 \begin{lemma}\label{homeo-restricts1}
  Let $\filt\subset\powerw$ be a free filter. Assume that $h\in\homeo{\cube}$ has the property that for every $x\in\cube$, $(x-h(x))\in C_\filt$. Then $h[K_\filt]=K_\filt$.
 \end{lemma}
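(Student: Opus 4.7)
The plan is to reduce the claim to two observations: first, that $K_\filt$ is closed under coordinatewise differences whenever the result stays in $\cube$; and second, that the hypothesis on $h$ is inherited by $h^{-1}$. For the closure property, given $u,v\in K_\filt$ with $u-v\in\cube$, for each $m<\omega$ the set $\{n:|u(n)|<2^{-(m+1)}\}\cap\{n:|v(n)|<2^{-(m+1)}\}$ lies in $\filt$ and is contained in $\{n:|(u-v)(n)|<2^{-m}\}$, so $u-v\in K_\filt$.

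With this in hand, the inclusion $h[K_\filt]\subseteq K_\filt$ is immediate: for $x\in K_\filt$ the hypothesis gives $x-h(x)\in C_\filt\subseteq K_\filt$, and $h(x)=x-(x-h(x))$ automatically belongs to $\cube$, so the closure property forces $h(x)\in K_\filt$.

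For the reverse inclusion I would verify that $h^{-1}$ also satisfies the hypothesis of the lemma. Writing $x=h^{-1}(y)$, one has $y-h^{-1}(y)=h(x)-x=-(x-h(x))$, and since both $\cubein$ and $K_\filt$ are invariant under the map $f\mapsto -f$, so is $C_\filt$; hence $y-h^{-1}(y)\in C_\filt$. Applying the previous paragraph to $h^{-1}$ in place of $h$ yields $h^{-1}[K_\filt]\subseteq K_\filt$, i.e.\ $K_\filt\subseteq h[K_\filt]$, which combined with the first inclusion gives the lemma. I do not anticipate any real obstacle here: the proof is essentially an algebraic manipulation, and the only point worth flagging is the symmetry $-C_\filt=C_\filt$, which makes the hypothesis self-dual under taking inverses.
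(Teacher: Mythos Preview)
Your proof is correct and follows essentially the same route as the paper: both arguments rest on the triangle-inequality observation that if two elements of $\cube$ each $\filt$-converge to $0$, so does their difference, and both obtain the reverse inclusion by symmetry. You package the first step as an abstract closure property and make the passage to $h^{-1}$ explicit via $-C_\filt=C_\filt$, whereas the paper carries out the triangle inequality directly and dismisses the other inclusion as ``similar''; mathematically there is no real difference.
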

 \begin{proof}
  Let $m\in\omega$. Then $A=\{n\in\omega:\lvert (x-h(x))(n)\rvert<2\sp{-(m+1)}\}\in\filt$. If $x\in C_\filt$, then $B=\{n\in\omega: \lvert x(n)\rvert<2\sp{-(m+1)}\}\in\filt$. By the triangle inequality it follows that $A\cap B\subset\{n\in\omega:\lvert h(x)(n)\rvert<2\sp{-m}\}$ so $\{n\in\omega:\lvert h(x)(n)\rvert<2\sp{-m}\}\in\filt$. This shows that $h[C_\filt]\subset C_\filt$, the other inclusion follows in a similar way.
 \end{proof}

 We will use the following special version of Lemma \ref{homeo-restricts1}.

 \begin{lemma}\label{homeo-restricts2}
  Let $\filt\subset\powerw$ be a free filter, let $h\in\homeo{\cube}$ and let $D\subset\cubein$ be countable dense. Assume that there is $z\in C_\filt$ such that for every $d\in D$ and $n\in\omega$,  $\lvert (d-h(d))(n)\rvert\leq\lvert z(n)\rvert$. Then $h[K_\filt]=K_\filt$.
 \end{lemma}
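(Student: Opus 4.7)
The plan is to reduce this to Lemma \ref{homeo-restricts1}: I will show that the pointwise estimate $|(x-h(x))(n)|\leq|z(n)|$, which the hypothesis gives on the dense set $D$, actually extends to every $x\in\cube$ by a continuity/density argument, and then read off $(x-h(x))\in C_\filt$ from $z\in C_\filt$.

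First I would observe that $D$ is dense in $\cube$ (not only in $\cubein$), because $\cubein$ is dense in $\cube$. Now fix an arbitrary $x\in\cube$ and pick a sequence $(d_k)_{k<\omega}\subset D$ converging to $x$ in the product topology of $\cube$. By continuity of $h$, we have $h(d_k)\to h(x)$, and hence $(d_k-h(d_k))(n)\to (x-h(x))(n)$ for every fixed $n\in\omega$. The hypothesis gives $|(d_k-h(d_k))(n)|\leq|z(n)|$ for every $k$ and $n$, so passing to the limit in $k$ yields
\[
|(x-h(x))(n)|\leq|z(n)| \quad \text{for every } n\in\omega.
\]

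Next I verify $(x-h(x))\in C_\filt$. Since $z\in C_\filt\subset\cubein$ we have $|z(n)|<1$ for all $n$, so $|(x-h(x))(n)|<1$ for all $n$; in particular $(x-h(x))\in\cubein\subset\cube$. Moreover, for each $m\in\omega$, $\{n:|z(n)|<2^{-m}\}\in\filt$ because $z\in K_\filt$, and the inclusion
\[
\{n:|z(n)|<2^{-m}\}\subset \{n:|(x-h(x))(n)|<2^{-m}\}
\]
then forces the right-hand set into $\filt$. Hence $(x-h(x))\in K_\filt\cap\cubein = C_\filt$.

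Since this holds for every $x\in\cube$, Lemma \ref{homeo-restricts1} applies and gives $h[K_\filt]=K_\filt$, as desired. There is no real obstacle here: the only subtlety is making sure $D$ is dense in all of $\cube$ (so that we may approximate arbitrary points of $\cube$, not just points of $\cubein$) and that the strict inequality $|z(n)|<1$ keeps $(x-h(x))$ inside $\cubein$, which is needed to conclude membership in $C_\filt$ rather than merely in $K_\filt$.
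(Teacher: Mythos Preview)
Your proof is correct and follows essentially the same approach as the paper: extend the pointwise bound from $D$ to all of $\cube$ by density and continuity, then invoke Lemma~\ref{homeo-restricts1}. You are simply more careful than the paper's terse argument in spelling out why $D$ is dense in all of $\cube$ and why the strict inequality $|z(n)|<1$ guarantees $(x-h(x))\in\cubein$, so that membership in $C_\filt$ (not just $K_\filt$) is secured.
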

 \begin{proof}
  From the product topology and the density of $D$ it follows that for every $x\in\cube$ and $n\in\omega$, $\lvert (x-h(x))(n)\rvert\leq\lvert z(n)\rvert$. Also, given $m\in\omega$,
  $$
  \{n\in\omega:\lvert z(n)\rvert<2\sp{-m}\}\subset\{n\in\omega:\lvert(x-h(x))(n)\rvert<2\sp{-m}\}.
  $$
  So the result follows from Lemma \ref{homeo-restricts1}.
 \end{proof}
 
 Let $X\subset\cube$. We will say that $X$ is in \emph{general position} if given $x,y\in X$ with $x\neq y$ then $x(n)\neq y(n)$ for all $n\in\omega$. This definition was used by Stepr\={a}ns and Zhou in \cite{step-zhou} in their proof that $[0,1]\sp{\kappa}$ is \CDH{} under Martin's axiom. We need a version of \cite[Lemma 3.1]{step-zhou} that accounts for $C_\filt$, and this is what we prove in the next result.
 
 \begin{lemma}\label{lemma-gen_position}
  Let $\filt\subset\powerw$ be a free filter and let $D\subset\cubein$ be a countable set. Then there exists $h\in\homeo{\cube}$ such that $h[D]$ is in general position, $h[\cubein]=\cubein$ and $h[K_\filt]=K_\filt$.
 \end{lemma}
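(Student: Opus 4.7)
The plan is to construct $h$ via the Inductive Convergence Criterion (Theorem~\ref{inductive_conv}) as a limit $h=\lim_{n\to\infty}h_n\circ\cdots\circ h_0$ of small ``shear'' homeomorphisms, each modifying only a single coordinate so that one potentially-colliding pair of $D$ is separated at a time. Enumerate $D=\{d_i:i<\omega\}$ and list all triples $(i,j,k)$ with $i<j<\omega$, $k<\omega$ as $\{(i_n,j_n,k_n):n<\omega\}$. Fix a strictly positive $z\in C_\filt$ and a continuous bump $\rho\colon[-1,1]\to[0,1]$ with $\rho(\pm 1)=0$, $\rho>0$ on $(-1,1)$, and $\rho(t)\leq 1-|t|$ (concretely, $\rho(t)=1-|t|$). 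Setting $H_n:=h_{n-1}\circ\cdots\circ h_0$, let $a=H_n(d_{i_n})$ and $b=H_n(d_{j_n})$. If $a(k_n)\neq b(k_n)$, put $h_n=\id{\cube}$; otherwise, since $a\neq b$ there is $m_n\neq k_n$ with $a(m_n)\neq b(m_n)$, and I would define
$$
h_n(x)(k)=x(k)\ \text{for}\ k\neq k_n,\qquad h_n(x)(k_n)=x(k_n)+\epsilon_n\,\rho(x(k_n))\,x(m_n),
$$
for a small $\epsilon_n>0$ to be specified.

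For $\epsilon_n<1$ the one-variable map $t\mapsto t+\epsilon_n\rho(t)c$ is a strictly increasing homeomorphism of $[-1,1]$ fixing $\pm 1$, and because $\rho(t)\leq 1-|t|$ it also sends $(-1,1)$ onto $(-1,1)$; hence $h_n\in\homeo{\cube}$ and $h_n[\cubein]=\cubein$. Since each $H_n$ preserves $\cubein$, we have $\rho(a(k_n))=\rho(b(k_n))>0$, so in the nontrivial case
$$
h_n(a)(k_n)-h_n(b)(k_n)=\epsilon_n\,\rho(a(k_n))\,\bigl(a(m_n)-b(m_n)\bigr)\neq 0,
$$
establishing a positive gap $\delta_n$ between the $k_n$-coordinates of $H_{n+1}(d_{i_n})$ and $H_{n+1}(d_{j_n})$. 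The parameter $\epsilon_n$ is then chosen small enough to satisfy four requirements simultaneously: (i)~the Inductive Convergence Criterion threshold, guaranteeing $h=\lim H_{n+1}\in\homeo{\cube}$; (ii)~$\epsilon_n\leq z(k_n)/2^{n+1}$, so that at every coordinate $k$ the total perturbation $|h(x)(k)-x(k)|$ is bounded by $z(k)$, whence $h[K_\filt]=K_\filt$ by Lemma~\ref{homeo-restricts2}; (iii)~$\epsilon_n<1$ as in the previous paragraph; and (iv)~$\sum_{m>n}\epsilon_m<\delta_n/2$, so that the gap $\delta_n$ is preserved by all later stages. Condition~(iv) is imposed recursively: once $\delta_n$ is known, all future $\epsilon_m$ are constrained to $\epsilon_m\leq\min(\delta_0,\dots,\delta_n)/2^{m+1}$. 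With these choices, $|h(d_{i_n})(k_n)-h(d_{j_n})(k_n)|\geq\delta_n-2\sum_{m>n}\epsilon_m>0$ for every $n$, and since every triple $(i,j,k)$ with $i<j$ appears in the enumeration, $h[D]$ is in general position.

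The main obstacle is the bookkeeping that couples the four constraints: the gap $\delta_n$ is only determined after $h_n$ is chosen, yet it must retroactively constrain all subsequent $\epsilon_m$, so the construction needs to maintain a running budget that bounds the remaining tail $\sum_{m\geq n+1}\epsilon_m$. The specific form of the bump $\rho$ is what reconciles the two otherwise competing needs: $\rho(\pm 1)=0$ and $\rho(t)\leq 1-|t|$ together guarantee that $h_n$ preserves both $\cube$ and $\cubein$, while $\rho>0$ on $(-1,1)$ guarantees that the perturbation is effective on the interior points of $D$. The preservation of $K_\filt$ is almost free, because the coordinate-wise bound by $z\in C_\filt$ is exactly what Lemma~\ref{homeo-restricts2} requires.
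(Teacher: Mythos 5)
Your construction is essentially the paper's: both proceed by the Inductive Convergence Criterion through small homeomorphisms that act on finitely many coordinates, fix the faces $B_k=\{x\in\cube:x(k)\in\{-1,1\}\}$ setwise, and have coordinatewise displacement dominated by an element of $C_\filt$ so that Lemma \ref{homeo-restricts1} applies; your explicit shear $t\mapsto t+\epsilon_n\rho(t)x(m_n)$ is just a concrete instance of the two-coordinate homeomorphisms the paper invokes abstractly, and your gap-budget bookkeeping is the same device the paper describes for keeping separations above a positive threshold.

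Two small points need patching. First, $h[\cubein]=\cubein$ does not follow from $h_n[\cubein]=\cubein$ for each $n$: a limit of homeomorphisms can push an interior point into the pseudoboundary, since $\cubein$ is not closed. The correct deduction (and the one the paper uses) is that each $h_n$ maps the \emph{closed} set $B_k$ onto itself for every $k$, hence so do $h=\lim H_n$ and $h^{-1}=\lim H_n^{-1}$ (uniform convergence of both sequences is part of what Theorem \ref{inductive_conv} provides); therefore $h[B(\cube)]=B(\cube)$ and $h[\cubein]=\cubein$. Alternatively, your shears satisfy $1-\lvert h_n(x)(k)\rvert\geq(1-\epsilon_n)(1-\lvert x(k)\rvert)$, and $\prod_n(1-\epsilon_n)>0$, which also does the job. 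Second, Lemma \ref{homeo-restricts2} assumes $D$ is dense, which the present lemma does not; but since your bound $\lvert h(x)(k)-x(k)\rvert\leq z(k)<1$ holds for \emph{all} $x\in\cube$, you should cite Lemma \ref{homeo-restricts1} directly (noting $x-h(x)\in\cubein$ because $z(k)<1$). With these adjustments the argument is complete; the remaining quibble (the tail bound at $n=0$ gives $\leq\delta_0/2$ rather than $<\delta_0/2$) is fixed by an extra factor of $2$ in the denominator.
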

 \begin{proof}[Sketch of proof]
  We leave out details which can be easily checked by the reader. By the Inductive Convergence Criterion it is sufficient to recursively construct a sequence of homeomorphisms and define $h$ to be the limit of their composition.
  
  For each $n\in\omega$, let
  $$
  B_n=\{x\in\cube:x(n)\in\{-1,1\}\}.
  $$
  Then $B(\cube)=\bigcup\{B_n:n\in\omega\}$ and $B_n$ is closed for each $n<\omega$.
  \medskip
  
  \noindent{\it Claim}: If $F\subset\cubein$ is a finite set in general position and $p\in\cubein\setminus F$ then there is $f\in\homeo{\cube}$ such that $f[F\cup\{p\}]$ is in general position and $f[B_n]=B_n$ for each $n\in\omega$.\medskip
  
  In order to prove the Claim, in fact $f$ will be a limit of a sequence of homeomorphisms as we describe next. 
  
  Given $y\in F$ there exists $n_y<\omega$ such that $p(n_y)\neq y(n_y)$. Now fix any $n\in\omega$. Now consider the projection $\pi:\cube\to[-1,1]\sp{\{n_y,y\}}$. Then $\pi(y)$ and $\pi(p)$ are two different points of $[0,1]\sp{\{n_y,y\}}$ so there is a homeomorphism $g_{y,n}:[-1,1]\sp{\{n_y,y\}}\to[-1,1]\sp{\{n_y,y\}}$ such that $h_{y,n}(\pi(p))(n)\neq h_{y,n}(\pi(y))(n)$, and that is the identity in the boundary of $[-1,1]\sp{\{n_y,y\}}$. Then define 
  $$
  f_{y,n}(x)(i)=\left\{
  \begin{array}{ll}
  x(i), & \textrm{if } i\notin\{n_y,y\},\textrm{ and}\\
  g_{y,n}(\pi(x))(i), & \textrm{if } i\in\{n_y,y\}.
  \end{array}
  \right.$$
  so that $f_{y,n}\in\homeo{\cube}$. 
  
  Naturally, we may ask that each of these homeomorphisms is as close to $\id{\cube}$ as we may wish. Further, notice that each of these homeomorphisms leaves $B_n$ fixed set-wise for every $n\in\omega$.
  
  In order to define $f$ as required in the Claim, we need to take a sequence of homeomorphisms and take the limit of the compositions. However, as we take the limit, even though we are separating $F$ from $p$, in the limit it may be that we loose this separation in some coordinate. So we need to make sure that this does not happen. For example, if $x,y\in F\cup\{p\}$ and $n\in\omega$ are such that $x(n)\neq y(n)$, then in subsequent homeomorphisms we only have to make sure that the distance between $x(n)$ and $y(n)$ never lies below some positive threshold. 
  
  Thus, by taking an appropriate composition, we can obtain the homeomorphism $f$ as required in the Claim.
  
  By taking an appropriate sequence of homeomorphisms as in the Claim, it is not hard to find $h\in\homeo{\cube}$ such that $h[D]$ is in general position and $h[B_n]=B_n$ for each $n\in\omega$. This implies that $h[B(\cube)]=B(\cube)$ so $h[\cubein]=\cubein$.
  
  To prove that $h[K_\filt]=K_\filt$, we can refer to Lemma \ref{homeo-restricts1}. It is not hard to argue that it is sufficient to prove that for every $x\in\cube$ and $n\in\omega$, $\lvert h(x)(n)-x(n)\rvert<2\sp{-n}$. In order to achieve this, it is sufficient that we are careful when constructing the homeomorphisms given above. For example, $g_{y,n}$ can be chosen as close to the identity as required by the coordinates $y_n$ and $n$.
 \end{proof}
 
 \begin{lemma}\label{lemma-miss_a_ctble}
  Let $\filt\subset\powerw$ be a free filter and $D,E\subset\cubein$ be countable sets. Then there exists $h\in\homeo{\cube}$ such that $h[C_\filt]=C_\filt$, $h[\cubein]=\cubein$ and for all $d\in D$, $e\in E$ and $n\in\omega$, $h(d)(n)\neq e(n)$.
 \end{lemma}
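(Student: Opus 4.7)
The plan is to build $h$ as the limit of a sequence of compositions via the Inductive Convergence Criterion (Theorem \ref{inductive_conv}), where each step handles one triple $(d,e,n) \in D \times E \times \omega$ by a small single-coordinate perturbation. Enumerate $D \times E \times \omega = \{(d_k, e_k, n_k) : k < \omega\}$ and, starting from $f_{-1} = \id{\cube}$, recursively pick $h_k \in \homeo{\cube}$ and set $f_k = h_k \circ f_{k-1}$. At step $k$ let $p_k = f_{k-1}(d_k) \in \cubein$. Choose a self-homeomorphism $\varphi_k \colon [-1,1] \to [-1,1]$ fixing $\{-1,1\}$ and close to the identity so that $\varphi_k(p_k(n_k)) \neq e_k(n_k)$; let $h_k$ act by $\varphi_k$ on the $n_k$-th coordinate and by the identity on all others. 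Because $\varphi_k$ fixes $\{-1,1\}$, $h_k$ fixes each $B_m = \{x \in \cube : x(m) \in \{-1,1\}\}$ setwise, so $h_k[\cubein] = \cubein$.

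The quantitative heart of the construction is the choice of $\eta_k = \sup_{t \in [-1,1]} |\varphi_k(t) - t|$. Writing $s_k = |\varphi_k(p_k(n_k)) - e_k(n_k)| > 0$, I impose: (i) $\eta_k < \epsilon_k$, the bound from Theorem \ref{inductive_conv}; (ii) for every $k$, $\sum_{j > k,\ n_j = n_k} \eta_j < s_k/2$, so that the separation achieved at step $k$ in coordinate $n_k$ is not destroyed by later steps; and (iii) $\sum_{j\colon n_j = m} \eta_j \leq 2^{-m-1}$ for every $m \in \omega$, yielding the uniform bound $|h(x)(m) - x(m)| \leq 2^{-m-1}$. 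Since only finitely many prior constraints affect $\eta_k$ at step $k$, all three can be respected by recursively shrinking; the main obstacle is keeping track of this hierarchy of adaptively tightened estimates (in particular, $s_k$ is fixed at step $k$ and then constrains every future $\eta_j$ with $n_j = n_k$), a standard bookkeeping technique in infinite-dimensional topology.

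With $h = \lim_k f_k$ in hand, the verification is short. First, $h \in \homeo{\cube}$ by Theorem \ref{inductive_conv}, and $h[\cubein] = \cubein$ because each $h_k$ preserves it. Second, the uniform bound from (iii) gives $(x - h(x)) \in C_\filt$ for every $x \in \cube$: its $m$-th coordinate has absolute value at most $2^{-m-1} < 1$, so $x - h(x) \in \cubein$, and the sets $\{m : |(x-h(x))(m)| < 2^{-q}\}$ are cofinite and thus belong to the free filter $\filt$; by Lemma \ref{homeo-restricts1} we conclude $h[K_\filt] = K_\filt$, hence $h[C_\filt] = C_\filt$. Finally, for each $k$, condition (ii) yields $|h(d_k)(n_k) - e_k(n_k)| \geq s_k - \sum_{j > k,\ n_j = n_k} \eta_j > s_k/2 > 0$, so $h(d)(n) \neq e(n)$ holds for every $(d,e,n) \in D \times E \times \omega$.
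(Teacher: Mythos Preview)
Your argument is correct, but it is considerably more elaborate than what the situation requires. The paper's proof exploits the product structure directly: for each fixed coordinate $n<\omega$, the projections $\pi_n[D]$ and $\pi_n[E]$ are countable subsets of $[-1,1]$, so one may choose $h_n\in\homeo{[-1,1]}$ with $h_n[\pi_n[D]]\cap\pi_n[E]=\emptyset$, $h_n$ fixing $\{-1,1\}$, and $\lvert h_n(t)-t\rvert<2^{-n}$ for all $t$. Setting $h(x)(n)=h_n(x(n))$ defines a product homeomorphism of $\cube$ outright---no Inductive Convergence Criterion, no enumeration of triples, no bookkeeping of budgets. Preservation of $\cubein$ is immediate from $h_n(\{-1,1\})=\{-1,1\}$, and preservation of $K_\filt$ follows from Lemma~\ref{homeo-restricts1} via the bound $\lvert (x-h(x))(n)\rvert<2^{-n}$. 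The point you missed is that, for a fixed $n$, one can separate \emph{all} of $\pi_n[D]$ from \emph{all} of $\pi_n[E]$ with a single small homeomorphism of the interval, because both sets are merely countable; there is no need to process the pairs $(d,e)$ one at a time.

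What your approach buys is flexibility: the ICC-plus-bookkeeping template you used would still work in settings where the target homeomorphism cannot be written as a coordinate product (as indeed happens later in the paper, in Lemma~\ref{lemma-gen_position} and in the main construction). Here, however, that flexibility is unnecessary and obscures a one-line argument.
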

 \begin{proof}
 Let $\pi_n:\cube\to[-1,1]$ be the projection into the $n$-th coordinate for each $n<\omega$.  Given $n<\omega$, there exists $h_n\in\homeo{[-1,1]}$ such that $h[\pi_n[D]]\cap \pi_n[E]=\emptyset$. Then if we define $h(x)(n)=h_n(x)$ for all $x\in\cube$ and $n\in\omega$, $h\in\homeo{\cube}$. As explained in the proof of Lemma \ref{lemma-gen_position}, we only have to choose $h_n$ for each $n\in\omega$ in such a way that for all $x\in[-1,1]$, $\lvert h_n(x)-x\rvert<2\sp{-n}$ so that we know $h[K_\filt]=K_\filt$. Notice that $h[\cubein]=\cubein$ is immediate by the definition of each $h_n$, $n\in\omega$.
 \end{proof}

 \noindent Consider the following statement, where $0<k<\omega$. Recall that $\partial{[-1,1]\sp{k}}$ denotes the topological boundary of $[-1,1]\sp{k}$.
 \begin{quote}
  $(\ast)_k$: Let $A,B\subset(-1,1)\sp{k}$ be finite sets with $A\cap B=\emptyset$, $\epsilon>0$, and $e:A\to B$ a bijection such that for every $a\in A$, \mbox{$\lVert a-e(a)\rVert<\epsilon$}. Then there exists $h\in\homeo{[-1,1]\sp{k}}$ such that for every $x\in[-1,1]\sp{k}$, $\lVert x-h(x)\lVert<\epsilon$, $h\restriction A=e$ and $h\restriction {\partial{[-1,1]\sp{k}}}=\id{\partial{[-1,1]\sp{k}}}$. 
 \end{quote}
 Statement $(\ast)_k$ is a generalization of strong $n$-homogeneity for all $1\leq n<\omega$ where we require that homeomorphisms are as small as possible. Clearly, $(\ast)_1$ is false because any homeomorphism of an interval either respects the order or inverts the order.
 
 \begin{lemma}
  The statement $(\ast)_k$ is true for every $k$ with $3\leq k<\omega$.
 \end{lemma}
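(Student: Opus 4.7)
The idea is to realize $h$ as a composition of $n=|A|$ commuting homeomorphisms, each supported in a small tube inside $(-1,1)\sp{k}$. Enumerate $A=\{a_1,\dots,a_n\}$ and set $b_i=e(a_i)$; since $A\cap B=\emptyset$, the $2n$ points $a_1,b_1,\dots,a_n,b_n$ are distinct. Choose $\delta>0$ so small that, for each $i$, the closed $\delta$-neighborhood $N_i$ of the straight segment from $a_i$ to $b_i$ lies in $(-1,1)\sp{k}$ and has diameter less than $\epsilon$.

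The crucial step is to construct pairwise disjoint polygonal arcs $\gamma_i\subset(-1,1)\sp{k}$ from $a_i$ to $b_i$ satisfying $\gamma_i\subset N_i$ and $\gamma_i\cap(A\cup B)=\{a_i,b_i\}$. This is where the hypothesis $k\geq 3$ is used: generic one-dimensional arcs in $\R\sp{k}$ are mutually disjoint because $1+1-k\leq-1$, and starting from the straight segments, arbitrarily small PL perturbations in the $(k-1)\geq 2$ normal directions suffice to confine each arc to $N_i$, avoid the finite set $(A\cup B)\setminus\{a_i,b_i\}$, and separate all pairs.

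Once the arcs are disjoint, thicken each $\gamma_i$ to a closed tubular neighborhood $T_i\cong D\sp{k-1}\times[0,1]$ having $\gamma_i$ as its core; by taking the tube radii small enough, the $T_i$ can be made pairwise disjoint, contained in $(-1,1)\sp{k}$, of diameter less than $\epsilon$, and satisfy $T_i\cap(A\cup B)=\{a_i,b_i\}$. Identify $T_i$ with $D\sp{k-1}\times[0,1]$ so that $a_i=(0,s_i)$ and $b_i=(0,t_i)$ for some $s_i,t_i\in(0,1)$. The standard sliding map $(y,r)\mapsto(y,\phi_y(r))$, where $\phi_y$ is a homeomorphism of $[0,1]$ fixing $\{0,1\}$ depending continuously on $y$, with $\phi_0(s_i)=t_i$ and $\phi_y=\id{[0,1]}$ for $y\in\partial D\sp{k-1}$, defines a homeomorphism of $T_i$ fixing $\partial T_i$ and sending $a_i$ to $b_i$. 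Extending by the identity outside $T_i$ yields $h_i\in\homeo{[-1,1]\sp{k}}$ supported in $T_i$, which is the identity on $\partial{[-1,1]\sp{k}}$ and moves every point by at most $\mathrm{diam}(T_i)<\epsilon$.

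Since the supports $T_i$ are pairwise disjoint, $h=h_n\circ\cdots\circ h_1\in\homeo{[-1,1]\sp{k}}$ acts as $h_i$ on each $T_i$ and as the identity elsewhere; hence $h\restriction A=e$, $h\restriction{\partial{[-1,1]\sp{k}}}=\id{\partial{[-1,1]\sp{k}}}$, and $\lVert h(x)-x\rVert<\epsilon$ for every $x$. The main obstacle is the disjoint-arc construction: in low dimension small arcs between the prescribed endpoint pairs may be forced to link, so it is only for $k\geq 3$ that their interiors can always be made simultaneously disjoint while remaining inside the tubes $N_i$ that guarantee the $\epsilon$-bound.
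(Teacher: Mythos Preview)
Your proposal is correct and follows essentially the same approach as the paper: construct pairwise disjoint small arcs joining each $a_i$ to $e(a_i)$, thicken them to pairwise disjoint $k$-cells of diameter $<\epsilon$ inside $(-1,1)^k$, and patch together local homeomorphisms supported in those cells. The only differences are cosmetic---the paper invokes the dimension-theoretic fact that $1$-dimensional sets do not separate connected open subsets of $[-1,1]^k$ for $k\ge 3$ to obtain the disjoint arcs and then appeals abstractly to homogeneity of $k$-cells, whereas you use a PL general-position argument for the arcs and write down an explicit sliding map in each tube; both routes yield the same homeomorphism.
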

 \begin{proof}
  Let $A=\{a_i:i<n\}$. Since a connected open set of $[-1,1]\sp{k}$ cannot be separated by sets of dimension $1$, it is possible to recursively construct a pairwise disjoint sequence of arcs $\{\alpha_i:i<n\}$ with diameter $<\epsilon$ such that $\alpha_i$ has endpoints $a_i$ and $e(a_i)$. Then it is possible to find a pairwise disjoint sequence of open sets $\{U_i:i<k\}$ such that $\alpha_i\subset U_i\subset\overline{U_i}\subset(-1,1)\sp{k}$ and $U_i$ has diameter $<\epsilon$ for each $i<k$. In fact, we may assume that $\overline{U_i}$ is homeomorphic to $[-1,1]\sp{k}$ for each $i<k$. Then by homogeneity of $k$-cells, it is possible to construct $h_i\in\homeo{\overline{U_i}}$ such that $h_i(a_i)=e(a_i)$. Define $h\in\homeo{[-1,1]}$ by $h(x)=h_i(x)$ if $x\in U_i$ and by $h(x)=x$ if $x\in[-1,1]\sp{k}\setminus\bigcup\{U_i:i< k\}$.
 \end{proof}

 \begin{ques}\label{small-case-open}
  Is $(\ast)_2$ true?
 \end{ques}
 
 The author personally asked Question \ref{small-case-open} to Professor Logan C. Hoehn during his 2019 visit to Mexico. Professor Hoehn conjectured that the answer is affirmative and provided an argument that, in the author's opinion, can be formalized as a proof. We will not try to reproduce Professor Hoehn's argument here as it is too extensive and not related to the main topic of this paper.
 
 Let $\mathcal{X}\subset[\omega]\sp\omega$. A tree $T\subset([\omega]\sp{<\omega})\sp{<\omega}$ is called a \emph{$\mathcal{X}$-tree of finite sets} if for each $s\in T$ there is $X_s\in\mathcal{X}$ such that for every $a\in[X_s]\sp{<\omega}$ we have $s\sp\frown a\in T$. It turns out that non-meager $P$-filters have a very useful combinatorial characterization as follows.

\begin{lemma}\cite[Lemma 1.3]{laflamme}\label{treelemma}
Let $\filt\subset\powerw$ be a free filter. Then $\filt$ is a non-meager $P$-filter if and only if every $\filt$-tree of finite sets has a branch whose union is in $\filt$.
\end{lemma}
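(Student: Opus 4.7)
The plan is to prove both directions of the equivalence. Both hinge on translating $\filt$-trees of finite sets into strategies in an infinite game where one player plays elements of $\filt$ and the other answers with finite subsets aiming to produce a union in $\filt$.

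For necessity (the tree property implies that $\filt$ is a non-meager $P$-filter), I would derive each property from a specific instance of the hypothesis. For the $P$-filter half, given a decreasing family $\{F_n:n<\omega\}\subseteq\filt$, the assignment $X_s:=F_{|s|}$ defines an $\filt$-tree; a branch $(a_n)$ with $B:=\bigcup_n a_n\in\filt$ is automatically a pseudointersection, because $B\setminus F_n\subseteq\bigcup_{k<n}a_k$ is finite for each $n$. For the non-meager half, argue by contradiction: if $\filt$ were meager, use a Talagrand-type meagerness witness to design an $\filt$-tree in which the $X_s$'s force each $a_n$ to avoid a growing finite "forbidden" region (the first $|s|$ blocks of the witness). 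A diagonal counting argument on block sizes then shows that no branch union can lie in $\filt$, contradicting the hypothesis.

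For sufficiency (non-meager $P$-filter implies the tree property), let $T$ be an $\filt$-tree with associated sets $X_s\in\filt$. The idea is to build a branch together with a descending chain in $\filt$ so that the $P$-filter property delivers a pseudointersection covered by the branch's union. Concretely, construct recursively $s_n=(a_0,\dots,a_{n-1})\in T$, $F_n\in\filt$ with $F_n\subseteq X_{s_n}$, $F_{n+1}\subseteq F_n$, and a fast-growing sequence of thresholds $K_n$ chosen so that $a_n:=F_n\cap[K_n,K_{n+1})$ captures the $n$-th block of any eventual pseudointersection. By the $P$-filter property there is $F\in\filt$ with $F\subseteq^*F_n$ for all $n$; if the $K_n$ are chosen large enough, $F\setminus F_n$ lies below $K_n$ for almost all $n$, so $F\cap[K_n,K_{n+1})\subseteq F_n\cap[K_n,K_{n+1})=a_n$, giving $F\subseteq^*\bigcup_n a_n$ and hence $\bigcup_n a_n\in\filt$.

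The principal obstacle is the coordination in sufficiency: the pseudointersection $F$ produced by the $P$-filter property is only chosen after the $F_n$'s are all built, yet the thresholds $K_n$ must be fixed during the recursion. The standard resolution is a bookkeeping argument that, at each stage $n$, uses non-meagerness to choose $K_n$ large enough to dominate the combinatorial requirements imposed by the partial history of $F_0,\dots,F_n$, so that any future pseudointersection will be caught by the blocks $[K_k,K_{k+1})$ from some point on. This joint use of both hypotheses—the $P$-filter property to produce $F$ and non-meagerness to permit a threshold choice that absorbs the finite exceptions—is exactly the combinatorial content of Laflamme's original argument, and it is well-known that neither hypothesis alone suffices to ensure the tree property.
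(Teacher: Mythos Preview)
The paper does not supply its own proof of this lemma; it is quoted from Laflamme and used as a black box, so there is nothing in the paper against which to compare your proposal.

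On the merits of your sketch: the necessity direction is correct and standard. The sufficiency direction, however, has a genuine gap at precisely the point you flag as the ``principal obstacle.'' Your proposed resolution---``at each stage $n$, use non-meagerness to choose $K_n$ large enough to dominate the combinatorial requirements imposed by the partial history $F_0,\dots,F_n$''---does not work as stated. Non-meagerness is a global statement about the filter; it gives no mechanism for selecting a single natural number during a recursion whose future (in particular the eventual pseudointersection $F$) is still undetermined. No choice of $K_n$ based only on $F_0,\dots,F_n$ can guarantee that $F\setminus F_n\subseteq K_n$ for an $F$ produced only after the entire sequence $(F_n)$ is built, and building the branch first means the $F_n$'s are already fixed.

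The argument that actually works inverts your order of operations. Since $T\subset([\omega]^{<\omega})^{<\omega}$ is countable, one first applies the $P$-filter property \emph{globally} to obtain a single $Y\in\filt$ with $Y\setminus X_s$ finite for every node $s\in T$. One then builds an interval partition $0=f(0)<f(1)<\cdots$ depending only on $T$ and $Y$: choose $f(n+1)$ large enough that $Y\setminus f(n+1)\subseteq X_s$ for every $s\in T$ of length at most $n$ with $\bigcup_i s(i)\subseteq f(n)$. Only now does non-meagerness enter, producing $Z\in\filt$ with $Z\subseteq Y$ and $Z\cap[f(n),f(n+1))=\emptyset$ for infinitely many $n$. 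The desired branch is then read off from these gaps (taking $a_k$ to be the portion of $Y$ between consecutive gap intervals), and its union contains $Z$ modulo a finite set. Thus the correct order is: $P$-filter over the whole tree, then interval partition, then a single application of non-meagerness, then the branch---not a single branch with non-meagerness invoked stage by stage.
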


So naturally, our strategy for the proof of sufficiency will be to recursively construct a homeomorphism, and (analogous to the proof in \cite{hg-hr-CDH}) to construct $z\in C_\filt$ with the properties given in the statement of Lemma \ref{homeo-restricts2}. However, we need some mechanism to connect $z$ with the Cantor set so we can use Lemma \ref{treelemma}.

Construct a Cantor scheme $\A=\{A_s:s\in 2\sp{<\omega}\}$ with the following properties
\begin{enumerate}[label=(\arabic*)]
 \item $A_\emptyset=[0,2]$,
 \item if $A_s=[a,b]$, then $A_{s\sp\frown 1}=[a,(a+b)/2]$ and $A_{s\sp\frown 0}=[(a+b)/2,b]$.
\end{enumerate}
Given $x\subset\omega\times\omega$ and $n\in\omega$, define
$$
\begin{array}{lrl}
x\sp{(n)}&=&\{i\in\omega: \pair{i,n}\in x\},\textrm{ and}\\
x_{(n)}&=&\{i\in\omega: \pair{n,i}\in x\}.
\end{array}
$$

We next define a function $\varphi:\mathcal{P}(\omega\times\omega)\to[0,2]\sp\omega$: given $x\subset\omega\times\omega$ and $m\in\omega$ we let $\phi(x)(m)$ be the unique point in $\bigcap\{A_{(x\sp{(m)}\restriction n)}:n\in\omega\}$. The proof of the following is left to the reader.

\begin{lemma}
Let $\varphi:\mathcal{P}(\omega\times\omega)\to[0,2]\sp\omega$ be defined as above.
 \begin{enumerate}[label=(\roman*)]
 \item For every $x\subset\omega\times\omega$ and $n\in\omega$, $0\leq\varphi(x)(n)\leq 2$.
  \item If $x\in \filt\sp{(\omega)}$, then $\frac{1}{2}\cdot\varphi(x)\in K_\filt$.
  \item If $x\subset\omega\times\omega$ is such that $\frac{1}{2}\cdot\varphi(x)\in K_\filt$ then $x\in\filt\sp{(\omega)}$.
  \item If $x\subset y\subset\omega\times\omega$ and $n\in\omega$, $\varphi(y)(n)\leq\varphi(x)(n)$.
 \end{enumerate}
\end{lemma}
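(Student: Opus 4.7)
Part (i) is immediate since $\varphi(x)(n)\in A_\emptyset=[0,2]$ by construction. The remaining three parts will all rest on a single observation about the Cantor scheme, which I would prove first and then apply uniformly.

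The key preliminary I would establish is a tight correspondence between $\varphi(x)(n)$ being small and $x^{(n)}$ containing an initial segment of $\omega$. Because the label $1$ always chops to the left half of the parent interval, an induction on $j$ gives $A_{\mathbf{1}\restriction j}=[0,2^{-j+1}]$; and if $x^{(n)}$ omits some coordinate below $j$, then taking the minimal such $i$ forces $\varphi(x)(n)\in A_{(\mathbf{1}\restriction i)^\frown 0}=[2^{-i},2^{-i+1}]$, so in particular $\varphi(x)(n)\geq 2^{-i}\geq 2^{-j+1}$. This yields the two implications I need: $\{0,\dots,j-1\}\subset x^{(n)}$ gives $\varphi(x)(n)\leq 2^{-j+1}$, and $\varphi(x)(n)<2^{-j+1}$ gives $\{0,\dots,j-1\}\subset x^{(n)}$.

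Parts (ii) and (iii) are then two sides of the same coin. For (ii), if $x\in\filt^{(\omega)}$ then $\bigcap_{i<j}x_{(i)}\in\filt$, and this intersection equals $\{n:\{0,\dots,j-1\}\subset x^{(n)}\}\subset\{n:\varphi(x)(n)\leq 2^{-j+1}\}$; bumping $j$ up by one upgrades this to the strict inequality that certifies $\tfrac{1}{2}\varphi(x)\in K_\filt$. For (iii), the hypothesis $\tfrac{1}{2}\varphi(x)\in K_\filt$ gives $\{n:\varphi(x)(n)<2^{-j}\}\in\filt$ for every $j$, and by the observation this set sits inside $\{n:j\in x^{(n)}\}=x_{(j)}$, so each $x_{(j)}\in\filt$ and hence $x\in\filt^{(\omega)}$.

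For (iv), $x\subset y$ gives $x^{(n)}\subset y^{(n)}$ coordinatewise; either they agree, in which case $\varphi(x)(n)=\varphi(y)(n)$, or I locate the first $i$ where they differ, where necessarily $y^{(n)}(i)=1$ and $x^{(n)}(i)=0$. Letting $s$ be the common prefix at level $i$, $\varphi(y)(n)$ is then forced into the left half $A_{s^\frown 1}$ while $\varphi(x)(n)$ sits in the right half $A_{s^\frown 0}$, giving $\varphi(y)(n)\leq\varphi(x)(n)$. The whole argument is really just bookkeeping on the scheme and I do not expect a genuine obstacle; the only point requiring care is matching the strict versus non-strict inequalities in the key observation with the strict inequality in the definition of $K_\filt$, which is handled by the one-step shift in $j$.
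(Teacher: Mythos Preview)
Your proof is correct. The paper explicitly leaves this lemma to the reader, so there is no proof to compare against; your argument via the key observation relating $\varphi(x)(n)<2^{-j+1}$ to $\{0,\dots,j-1\}\subset x^{(n)}$ is exactly the kind of routine bookkeeping the author had in mind, and your handling of the strict-versus-nonstrict inequalities and of part (iv) via the first point of disagreement is clean.
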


We have all the ingredients necessary for our proof.

\begin{proof}[Proof of sufficiency in Theorem \ref{main-thm}]
Let $\filt\subset\powerw$ be a non-meager $P$-filter and let $D,E\subset C_\filt$ be countable dense sets, we must find a homeomorphism $h:\cube\to\cube$ such that $h[C_\filt]=C_\filt$ and $h[D]=E$. 

By Lemmas \ref{lemma-gen_position} and \ref{lemma-miss_a_ctble} we may assume that $D\cap E=\emptyset$ and $D\cup E$ is in general position. It is also possible to assume that the set $\{\lvert d(n)-e(n)\rvert:n\in\omega\}$ is disjoint from the countable set of endpoints of elements of the Cantor scheme $\A$. We consider an enumeration $D\cup E=\{d_n:n<\omega\}$.

Let $F\in\filt$ be such that $\omega\setminus F$ is infinite. We will assume that $F=\{3n:n\in\omega\}$ for the sake of a simpler proof. For each $n\in\omega$, we let
$$
J_n=[-1,1]\sp{\{3n,3n+1,3n+2\}}.
$$
For each $n\in\omega$, we define the projection $\pi_n:\cube\to J_n$ such that for all $x\in\cube$,  $\pi_n(x)=x\restriction\{3n,3n+1,3n+2\}$.

Recall that it is posssible to define a metric in $[-1,1]\sp{3}$ by
$$
\lVert x-y\rVert=\max\{\lvert x(0)-y(0)\rvert,\lvert x(1)-y(1)\rvert,\lvert x(2)-y(2)\rvert\}
$$
for $x,y\in[-1,1]\sp{3}$. We will be using this metric in each of the cubes $J_n$.

The reason we are grouping the coordinates in groups of three is that we will need property $(\ast)_3$. Notice that $K_\filt=K_{\filt\restriction F}\times[-1,1]\sp{\omega\setminus F}$, where $\filt\restriction F=\{x\cap F:x\in\filt\}$ is also a non-meager $P$-filter. So it is sufficient to have a condition like that of Lemma \ref{homeo-restricts2} only on the coordinates contained on $F$. In order to keep the proof simpler, we notice that it is sufficient to consider the following condition.

\begin{itemize}
 \item[$(\star)$] Let $h\in\homeo{\cube}$ such that $z\in\filt\sp{(\omega)}$ has the property that for all $d\in D$, $\lVert \pi_n(d)-\pi_n(h(d))\rVert\leq \varphi(z)(n)$. Then $h[K_\filt]=K_\filt$.
\end{itemize}

Let us give an outline of our proof. We will construct a $\filt\sp{(\omega)}$-tree of finite sets and we will have a homeomorphism that is a product of homeomorphisms in the spaces $\{J_n:n\in\omega\}$ at each node. Each branch of the tree will define a homeomorphism $h\in\homeo{Q}$ such that $h[\cubein]=\cubein$ and $h[D]=E$. By Lemma \ref{treelemma}, there will exist one branch such that $h$ also has the property that $h[K_\filt]=K_\filt$.

Let us be more explicit now. We recursively define $T\subset([\omega\times\omega]\sp{<\omega})\sp{<\omega}$ and for each $s\in T$ we define $m_s\in\omega$, $F_s\in\filt\sp{(\omega)}$, $f_s\subset D\times E$ and a collection of homeomorphisms $\G_s=\{h_i\sp{(s)}:i\in\omega\}$ with the following properties.
\begin{enumerate}[label=(\roman*)]
 \item $\emptyset\in T$, $m_\emptyset=0$, $F_\emptyset=\omega\times\omega$ and $\G_\emptyset=\{\id{J_i}:i\in\omega\}$.
 \item The immediate successors of $s\in T$ are $\{s\cup\{\pair{\lvert s\rvert,t}\}:t\in[F_s]\sp{<\omega}\}$.
 \item For all $s\in T$, $F_s\subset \omega\times(\omega\setminus m_s)$.
 \item If $s\in T$ and $i<\lvert s\rvert$, $s(i)\subset\omega\times m_s$.
 \item If $s\in T$ with $n=\lvert s\rvert$, then $f_s$ is a finite bijection with $\mathrm{dom}(f_s)\subset D$, $\mathrm{im}(f_s)\subset E$ and $d_n\in\mathrm{dom}(f_s)\cup\mathrm{im}(f_s)$.
 \item If $s,t\in T$ are such that $s\subset t$, then $f_s\subset f_t$.
 \item If $s\in T$ and $i\in\omega$ then $h_i\in\homeo{J_i}$ with $h_i\restriction{\partial{J_i}}=\id{\partial{J_i}}$.
 \item If $s\in T$ and $m_s\leq i$ then $h_i=\id{J_i}$.
 \item Let $s\in T$, $\lvert s\rvert=n$, $i<m_s$ and $k\in\omega$ with $d_k\in\mathrm{dom}(f_s)\cup\mathrm{im}(f_s)$. If $d_k\in D$, then $$\pi_i(f_s(d_k))=(h_i\sp{s}\circ h_i\sp{s\restriction{n-1}}\circ\cdots\circ h_i\sp{s\restriction 1}\circ h_i\sp{\emptyset})(\pi_i(d_k)). $$ If $d_k\in E$ then, $$\pi_i(f_s\sp{-1}(d_k))=(h_i\sp{s}\circ h_i\sp{s\restriction{n-1}}\circ\cdots\circ h_i\sp{s\restriction 1}\circ h_i\sp{\emptyset})\sp{-1}(\pi_i(d_k)). $$
 \item Let $s\in T$ and $i<m_s$. For every $x\in J_i$ we have the inequality
 $$\lVert x-(h_i\sp{s}\circ h_i\sp{s\restriction{n-1}}\circ\cdots\circ h_i\sp{s\restriction 1}\circ h_i\sp{\emptyset})(x)\rVert< \varphi\left(\bigcup\{s(j):j<m_s\}\right)(i).$$
 \item Let $s\in T$, $k<\lvert s\rvert$ and $m_s\leq i$. If $d_k\in D$, then $$\lVert\pi_i(d_k)- \pi_i(f_s(d_k))\rVert\leq\varphi(F_s)(i).$$ If $d_k\in E$, then $$\lVert\pi_i(d_k)- \pi_i(f_s\sp{-1}(d_k))\rVert\leq\varphi(F_s)(i).$$
\end{enumerate}

We also assume that for each $s\in T$ and $i<m_s$, the sequence of homeomorphisms $\{h_i\sp{s\restriction m}:m\leq \lvert s\rvert\}$ is chosen to satisfy the Inductive Convergence Criterion so that every branch will converge to a homeomorphism.

Before proving the construction is possible, let us notice an important consequence of items (iii) and (iv). Recursively, they imply that if $b$ is a branch of $T$, then $\{b(n):n<\omega\}$ is pairwise disjoint. But no only this, but $b(n+1)\subset\omega\times(m_{b\restriction n+1}\setminus m_{b\restriction n})$ for all $n\in\omega$.

So now let us assume that we have constructed the tree up to some $s\in T$ with $\mathrm{dom}(s)=n$ and, as stated in (ii), given an immediate successor $s\sp\prime=s\cup\{\pair{\lvert s\rvert,t}\}$ where $t$ is a finite subset of $F_s$. First, we define $$m_{s\sp\prime}=\max\{j\in\omega:\exists i\in\omega\ (\pair{i,j}\in t)\}+m_s+1.$$

Next, we define $f_{s\sp\prime}$. Let $\ell=\min\{i\in\omega: d_i\notin\mathrm{dom}(f_s)\cup\mathrm{im}(f_s)\}$. We may assume that $d_\ell\in D$; the case when $d_\ell\in E$ will be similar. So we have to choose the value of $f_{s\sp\prime}(d_\ell)\in E$. We have to choose this element so that we do not break the promises we have made in conditions (ix), (x) and (xi). Thus, for each $i<m_{s\sp\prime}$ we will define an open set $V_i$ of $J_i$ where we will want $f_{s\sp\prime}(d_\ell)\in E$ to be.\medskip

\noindent\underline{Case 1:} $i<m_s$. Let $g=(h_i\sp{s}\circ h_i\sp{s\restriction{n-1}}\circ\cdots\circ h_i\sp{s\restriction 1}\circ h_i\sp{\emptyset})$, $\delta_i=\varphi\left(\bigcup\{s(j):j<m_s\}\right)(i)$ and $M=\lVert \pi_i(d_\ell)-g(\pi_i(d_\ell))\lVert$. By condition (x) and continuity, there exists an open set $U_i\subset J_i$ such that $\pi_i(d_\ell)\in U$ and for all $x\in U_i$, 
$$
\lVert x-g(x)\rVert\in(M_i/2,(M_i+\delta_i)/2).
$$
Now, let $V_i$ be an open subset of $J_i$ with $g(\pi_i(d_\ell))\in V_i$, $V_i\cap\textrm{im}(f_i)=\emptyset$ and $V_i\subset g[U_i]$. We may assume that $\overline{V_i}$ is homeomorphic to $[-1,1]\sp{3}$.\medskip

\noindent\underline{Case 2:} $m_s\leq i<m_{s\sp\prime}$. By the definition of $s\sp\prime$, $t$ and the function $\varphi$ it is easy to see that $\varphi(F_s)(i)\leq\varphi(t)(i)$. Here, it is important to notice that since $t$ is a finite set, $\varphi(t)(i)$ is an endpoint of some (and many) of the elements of the Cantor scheme $\A$. Further, by the choices in $D$ and $E$ that we made in the first step in the proof, it follows that the distances on the left sides of the equations given in (xi) cannot be equal to $\varphi(t)(i)$. Then it follows that for $k<n$ we have the following strict inequalities:  if $d_k\in D$, then $$\lVert\pi_i(d_k)- \pi_i(f_s(d_k))\rVert<\varphi(t)(i),$$ and if $d_k\in E$, then $$\lVert\pi_i(d_k)- \pi_i(f_s\sp{-1}(d_k))\rVert<\varphi(t)(i).$$ We define $V_k=\{x\in J_i:\lVert x-\pi_i(d_k)\rVert<\varphi(t)(i)\}$. \medskip

Then we define $V=\{x\in\cubein:\forall i<m_{s\sp\prime}\ (\pi_i(x)\in V_i)\}$, choose $e\in V\cap E$ and we define $f_{s\sp\prime}=f_s\cup\{\pair{d_\ell,e}\}$. It recursively follows that (v) is satisfied.

Next, we have to define $\G_{s\sp\prime}$. For $i\geq m_{s\sp\prime}$, condition (viii) defines $h_s\sp{s\sp\prime}$ so let us focus on $i< m_{s\sp\prime}$.

First, let $i< m_s$ and consider the definitions given above in Case 1. Then define $h_i\sp{s\sp\prime}\in\homeo{J_i}$ to be such that $h_i\sp{s\sp\prime}(g(\pi_i(d_\ell)))=\pi_i(e)$ and $h_i\sp{s \sp\prime}\restriction{J_i\setminus V_i}=\id{J_i\setminus V_i}$. It is easy to see that (ix) and (x) will hold for $s\sp\prime$. Notice also that we can take $V_i$ as small as we wish, and this will give us $h_i\sp{s\sp\prime}$ close to $\id{J_i}$ as needed by the Inductive Convergence Criterion.

Now take $m_s\leq i<m_{s\sp\prime}$. Define $B_0=\{k<\omega:d_k\in\mathrm{dom}(f_{s})\}$ and $B_1=\{k<\omega:d_k\in\mathrm{im}(f_{s}\}$; then consider the two finite sets  
$$
\begin{array}{ccl}
 H_0 & = &\{\pi_i(d_k):k\in B_0\}\cup\{\pi_i(f_s(d_k)):k\in B_1\}\cup\{\pi_i(d_\ell)\}\\
 H_1 & = &\{\pi_i(d_k):k\in B_1\}\cup\{\pi_i(f_s\sp{-1}(d_k)):k\in B_0\}\cup\{\pi_i(e)\}
\end{array}
$$
Then $f_{s\sp\prime}$ naturally induces a bijection $r:H_0\to H_1$. Notice that here we are using our assumptions about $D$ and $E$ at the begining of the proof. Now, according to what was said in Case 2, for every $a\in H_0$ we have the inequality $\lVert a-r(a)\rVert<\varphi(t)(i)$. Here we use property $(\ast)_3$ that we defined above. Let $h_i\sp{s\sp\prime}\in\homeo{J_i}$ extend $r$ in such a way that for all $x\in J_i$, $\lVert x- h_i\sp{s\sp\prime}(x)\rVert<\varphi(t)(i)$ and $h_i\sp{s\sp\prime}\restriction{\partial J_i}=\id{\partial J_i}$. Notice that, by the observations made before $\varphi(t)=\varphi(\bigcup\{s\sp\prime(j):j<m_{s\sp\prime}\})(i)$. This implies that items (ix) and (x) hold for $s\sp\prime$.

Finally, we have to define $F_{s\sp\prime}$. Consider $d_k-e$, since both $d_k,e\in C_\filt$ it is easy to see that $\frac{1}{2}\cdot (d_k-e)\in K_\filt$. By the choice of $D$ and $E$ at the begining of the proof, it is easy to see that $\lvert\varphi\sp\leftarrow(d_k-e)\rvert=1$. Let $Y\in\mathcal{P}(\omega\times\omega)$ be such that $\varphi(Y)=d_k-e$. Then $Y\in\filt\sp{(\omega)}$. Define
$$
F_{s\sp\prime}=F_s\cap Y\cap\left(\omega\times(\omega\setminus m_{s\sp\prime})\right).
$$
The proof that the rest of the conditions hold with this definition is not hard. So this completes the recursive construction of the tree.

Let $b$ be a branch of $T$. For each $i\in\omega$ we define
$$
h_i\sp{b}=\lim_{n\to\infty}{(h_i\sp{b\restriction n}\circ h_i\sp{b\restriction{n-1}}\circ\cdots\circ h_i\sp{b\restriction 1}\circ h_i\sp{\emptyset})}
$$
which is a homeomorphism as we have discussed earlier. So let $h\sp{b}=\prod_{i<\omega}{h_i\sp{b}}\in\homeo{\cube}$ be the product homeomorphism. It easily follows that $h[D]=E$. Also, $h[B(\cube)]=B(\cube)$ holds because each $h_i\sp{b}\restriction{\partial J_i}=\id{\partial J_i}$ for each $i\in\omega$.

So finally, since $\filt\sp{(\omega)}$ is a non-meager $P$-filter, there exists a branch $y$ of $T$ whose union $z=\bigcup\{y(i):i\in\omega\}$ belongs to $\filt\sp{(\omega)}$. It is now easy to verify that the hypothesis in property $(\star)$ holds. Thus, $h\sp{y}[K_\filt]=K_\filt$. Then $h\sp{y}[C_\filt]=C_\filt$, which completes the proof.
\end{proof}

\section{A proof that $[0,1]\sp{\kappa}$ is \CDH{} when $\kappa<\mathfrak{p}$}\label{correction}

As mentioned in the Introduction, in this section we provide a correct proof of the Stepr\={a}ns-Zhou result that $X\sp\kappa$ is \CDH{} when $X\in\{[0,1], 2\sp\omega,\R\}$ and $\kappa<\mathfrak{p}$.

Naturally, this proof is simpler because we do not need to consider any filter when constructing a homeomorphism. Moreover, we use an idea similar to the one in the previous section. Namely, $[0,1]$ is not rich enough and we need to consider homeomorphisms of $[0,1]\sp{k}$ for some $k$ with $1<k<\omega$. In the previous section we needed property $(\ast)_k$ that we know holds if $k$ is at least 3. For the proof in this section, we only need to work in $[0,1]\sp2$. 

This is because $\R\sp2$ is \emph{strongly $2$-homogeneous}; as $2\sp\omega$ also is. And in $[0,1]\sp{2}$ we have a similar property: given $x,y,z,w\in(0,1)\sp2$ with $x\neq y$ and $z\neq w$ there exists a homeomorphism $h:[0,1]\sp2\to[0,1]\sp2$ with $h(x)=z$, $h(y)=w$ and $h\restriction{\partial{[0,1]\sp2}}=\id{\partial[0,1]\sp2}$.

We will leave most of the details of this proof to the reader. Also, we will work with the case $X=[0,1]$; the other cases are similar and will be left to the reader. For each $\alpha<\kappa$, let $J_\alpha=[0,1]\sp{2}$, we will prove that $\Pi=\prod_{\alpha<\kappa}{J_\alpha}$ is \CDH{}.

Given $f,g\in\homeo{[0,1]\sp2}$, let $\rho(f,g)=\sup\{\lVert f(x)-g(x)\rVert\colon x\in [0,1]\sp2\}$ and $\varsigma(f,g)=\max\{\rho(f,g),\rho(f\sp{-1},g\sp{-1})\}$. Then $\varsigma$ is a compatible complete metric for $\homeo{[0,1]\sp2}$. This discussion can be found in \cite{vminf89}.

So let $D,E\subset\Pi$ be countable dense. As mentioned in \cite{step-zhou} we may assume that $D\cup E$ is in general position, $D\cap E=\emptyset$ and for every $x\in D\cup E$ and $\alpha<\kappa$ we have $x(\alpha)\in (0,1)\sp2$. We will define a poset to force a homeomorphism $H:\Pi\to\Pi$ with $H[D]=E$.

We define a poset $\poset$ inspired by the poset in Baldwin's paper \cite{baldwin-MA_kappa_homogeneous}. A condition $p\in\poset$ is $p=\pair{f_p,n_p,F_p,\{h\sp{p}_\alpha:\alpha<\kappa\}}$ where
\begin{enumerate}[label=(\arabic*)]
 \item $f_p$ is a finite injection with $\mathrm{dom}(f_p)\subset D$ and $\mathrm{im}(f_p)\subset E$,
 \item $n_p\in\omega$,
 \item $F_p\in[\kappa]\sp{<\omega}$,
 \item for all $\alpha<\kappa$, $h\sp{p}_\alpha\in\homeo{J_\alpha}$ with $h\sp{p}_\alpha\restriction{\partial J_\alpha}=\id{\partial J_\alpha}$,
 \item for all $\alpha\in F_p$ and $d\in\mathrm{dom}(f_p)$, $h\sp{p}_\alpha(d(\alpha))=f_p(d)(\alpha)$, and
\item if $\alpha\in\kappa\setminus F_p$ then $h_\alpha\sp{p}=\id{J_\alpha}$.
\end{enumerate}
We define $q\leq p$ if
\begin{enumerate}[resume]
 \item $f_p\subset f_q$,
 \item $n_p\leq n_q$,
 \item $F_p\subset F_q$, and
 \item for all $\alpha\in F_p$ and $d\in\mathrm{dom}(F_p)$, $\varsigma(h_\alpha\sp{p},h_\alpha\sp{q})<2\sp{-n_p}-2\sp{-n_q}$.
\end{enumerate}

First, we give the proof that $\poset$ is $\sigma$-centered. The following argument is inspired by Baldwin's \cite{baldwin-MA_kappa_homogeneous}. First, if $Y\subset[0,1]\sp2$, then 
$$
\delta(Y)=\min\{\lVert x-y\rVert:x,y\in Y, x\neq y\}.
$$
where $\lVert x\rVert$ is the usual Euclidean distance (or any other equivalent one). Let $S$ be the set of finite injections $s$ with $\mathrm{dom}(s)\subset D$, $\mathrm{im}(s)\subset E$.

Now, all of our homeomorphisms will be products of homeomorphisms so consider the product space $\Upsilon=\prod\{\homeo{J_\alpha}:\alpha<\kappa\}$, then this space is separable by the Hewitt-Marczewski-Pondiczery Theorem. Thus, there is a countable dense set $H\subset\Upsilon$.

For $s\in S$, $m,k\in\omega$ and $\tau\in H$ we define
$$
\begin{array}{ccl}
\poset(s,m,k,\tau) & = &\left\{p\in\poset: f_p=s,\ n_p=m,\right.\\
& & \ \forall \alpha\in F_p\ (2\sp{-(k+m)}<\delta(\{e(\alpha):e\in\mathrm{im}(F_p)\})),\\ 
& & \ \left. \forall \alpha\in F_p\ (\varsigma(h_\alpha\sp{p},\tau_\alpha)<2\sp{-(k+m+3)})\right\}.
\end{array}
$$
It is easy to see that
$$
\poset=\bigcup\{\poset(s,m,k,\tau):s\in S, m,k\in\omega, \tau\in H\}.
$$

Now fix $s\in S$, $m,k\in\omega$ and $\tau\in H$; we will prove that $\poset(s,m,k,\tau)$ is centered. Take a finite subset $\{p_i:i<n\}\subset\poset(s,m,k,\tau)$. Define $F\sp\prime=\bigcup\{F_{p_i}:i\in\omega\}$, which is a finite subset of $\kappa$.

Fix $\alpha\in F\sp\prime$. For each $d\in\mathrm{dom}(s)$, we define $$B(d,\alpha)=\{x\in J_\alpha: \lVert \tau_\alpha(d(\alpha))-x\lVert\leq2\sp{-(k+m+3)}\}.$$ Notice that $\tau_\alpha(d(\alpha))$ and $s(d)(\alpha)$ are both elements of $B(d,\alpha)$. Also, it is not hard to see that $\{B(d,\alpha):d\in D\}$ is pairwise disjoint. Define $g_\alpha\in\homeo{[0,1]\sp2}$ in such a way that $g_\alpha(\tau_\alpha(d(\alpha)))=s(d)(\alpha)$ for each $d\in\mathrm{dom}(s)$ and $g_\alpha$ restricted to $J_\alpha\setminus\bigcup\{B(d,\alpha):d\in\mathrm{dom}(s)\}$ is the identity. Clearly, we may also assume that $g_\alpha\restriction{\partial [0,1]\sp2}=\id{\partial [0,1]\sp2}$.

Let us define $q\in\poset$ that will be a common extension for all $\{p_i:i<n\}$. Let $f_q=s$, $n_q=n_p+1$ and $F_q=F\sp\prime$. If $\alpha\in F_q$, let $h_\alpha\sp{q}=g_\alpha\circ\tau_\alpha$. Finally, if $\alpha\in\kappa\setminus F_q$ then $h_\alpha\sp{q}=\id{J_\alpha}$. That this is indeed a common extension will be left to the reader. This completes the proof that $\poset$ is $\sigma$-centered.

So now we have to consider the following dense subsets of $\poset$.
$$
\begin{array}{ccll}
 P(d) & = & \{p\in\poset: d\in\mathrm{dom}(f_p)\} & \textrm{for }d\in D, \\
 Q(e) & = & \{p\in\poset: e\in\mathrm{im}(f_p)\} & \textrm{for }e\in E, \\
 R(n) & = & \{p\in\poset: n_p\geq n\} & \textrm{for }n\in\omega, \textrm{ and}\\
 S(\alpha) & = & \{p\in\poset: \alpha\in F_p\} & \textrm{for }\alpha<\kappa.
\end{array}
$$

We will leave the proof of density to the reader, who will surely find this proofs similar to the arguments in the previous section. In particular, in order to prove the density of the first two families of sets, the homogeneity property of $[0,1]\sp{2}$ mentioned above must be used.

Assume that $G$ is a filter in $\poset$ that intersects all dense sets above. First, it easily follows that $f_G=\bigcup\{f_p:p\in G\}$ is a bijection from $D$ to $E$. It only remains to prove that $f_G$ extends to a homeomorphism of $\Pi$. In order to achieve this, we need to prove that the homeomorphisms in each condition converge to some homeomorphism.

Fix $\alpha<\kappa$ and let $p_0\in G$ be such that $\alpha\in F_{p_0}$. For each $n>n_{p_0}$ we define
$$
K_n=\{h_\alpha\sp{p}:p\in G, p\leq p_0,n<n_p\},
$$
which is a subset of $\homeo{J_\alpha}$. Let $p,q\in G$ with $p,q\leq p_0$. Then there exists $r\in G$ with $r\leq p$ and $r\leq q$. By the definition of the partial order, $\varsigma(h_\alpha\sp{p}, h_\alpha\sp{q})\leq \varsigma(h_\alpha\sp{p},h_\alpha\sp{r})+\varsigma(h_\alpha\sp{r},h_\alpha\sp{q})<2\sp{-n_p}+2\sp{-n_q}\leq 2\sp{-n}$. This proves that $K_n$ has diameter $\leq 2\sp{-n}$ with the metric $\varsigma$. Define $h_\alpha\sp{G}$ to be the unique point in the intersection
$$
\bigcap\{\overline{K_n}:n>n_{p_0}\}.
$$
Then $h_\alpha\sp{G}\in\homeo{J_\alpha}$.

Finally, define $h\sp{G}=\prod\{h_\alpha\sp{G}:\alpha<\kappa\}$. It follows that $h_G$ is a homeomorphism. We leave the proof that $f_G\subset h_G$ to the reader. This concludes the proof.

\section{Higher cardinalities}

In this section we pose questions about higher cardinalities. From the Hewitt-Marczewski-Pondiczery Theorem, we know that $2\sp\kappa$ is separable if and only if $\kappa\leq\cont$. So we may ask for an extension of \cite[Theorem 4]{step-zhou}.

\begin{ques}(Michael Hru\v s\'ak)
 Find all free filters $\filt\subset\powerw$ and cardinals $\omega\leq\kappa\leq\cont$ such that $\filt\sp{\kappa}$ is \CDH{}.
\end{ques}

Naturally, the next question is whether there is an uncountable space $X$ such that $C_p(X)$ is \CDH{}. Here we nominate a specific space for this role.

Let $\omega\leq\kappa\leq\cont$ and $\filt$ be a free filter on $\omega$. We define a space $X_{\filt,\kappa}=(\kappa\times\omega)\cup\{\infty\}$ such that $\kappa\times \omega$ is discrete and a neighborhood of $\infty$ is of the form $\{\infty\}\cup\left(\bigcup\{\alpha\times A_\alpha:\alpha<\kappa\}\right)$ such that $A_\alpha\in\filt$ for all $\alpha<\kappa$.

\begin{lemma}
 If $\filt$ is a free filter on $\omega$ and $\omega\leq\kappa\leq\cont$, then $C_p(X_{\filt,\kappa})$ is separable.
\end{lemma}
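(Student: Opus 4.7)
The plan is to invoke Theorem~\ref{Cp-facts}(a), which reduces the task to exhibiting a continuous bijection from $X_{\filt,\kappa}$ onto some separable metrizable space $Y$. Since $\kappa \leq \cont$ and the unit sphere $S$ of the separable Hilbert space $\ell\sp 2$ has cardinality $\cont$, I can fix an injection $\alpha \mapsto c_\alpha$ from $\kappa$ into $S$; the target space $Y$ will be the image of $\ell\sp 2$ under the map defined below.

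I would then define $f \colon X_{\filt,\kappa} \to \ell\sp 2$ by $f(\infty) = 0$ and $f(\pair{\alpha,n}) = c_\alpha/(n+1)$ for $\pair{\alpha,n} \in \kappa \times \omega$. Injectivity is immediate: $\lVert f(\pair{\alpha,n}) \rVert = 1/(n+1)$ separates different values of $n$; the $c_\alpha$ are pairwise distinct, so for a fixed $n$ different values of $\alpha$ yield different images; and $f(\infty) = 0$ is separated from everything else since each $c_\alpha$ has norm $1$.

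For continuity, the only nontrivial point is $\infty$. Given $\epsilon > 0$, pick $N$ with $1/(N+1) < \epsilon$; since $\filt$ is free it contains the Fr\'echet filter, so $A = \omega \setminus N$ belongs to $\filt$. Taking $A_\alpha = A$ for every $\alpha$, the set $U = \{\infty\} \cup \bigcup_{\alpha<\kappa}(\{\alpha\} \times A)$ is a basic neighborhood of $\infty$ in $X_{\filt,\kappa}$, and by construction $f[U]$ lies in the open ball of radius $\epsilon$ about $0$ in $\ell\sp 2$. At the isolated points of $X_{\filt,\kappa}$, continuity is automatic.

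The image $Y = f[X_{\filt,\kappa}]$ is a subspace of the separable metric space $\ell\sp 2$, hence separable and metrizable; so $f \colon X_{\filt,\kappa} \to Y$ is a continuous bijection onto a separable metrizable space and Theorem~\ref{Cp-facts}(a) finishes the argument. There is no substantive obstacle here: the hypothesis $\kappa \leq \cont$ is used only to find enough pairwise distinct unit vectors $c_\alpha$, and the freeness of $\filt$ is used only to place the cofinite tails of $\omega$ into $\filt$.
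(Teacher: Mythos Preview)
Your proof is correct and follows essentially the same strategy as the paper: both invoke Theorem~\ref{Cp-facts}(a) and build a continuous bijection onto a separable metrizable space by mapping $\infty$ to a single point and each column $\{\alpha\}\times\omega$ to a sequence converging to that point along a direction indexed by $\alpha$, using $\kappa\leq\cont$ to obtain enough pairwise distinct directions. The only cosmetic difference is that the paper carries this out inside $\R$ (choosing $\cont$ many pairwise disjoint real sequences converging to a fixed point $p$) rather than in $\ell\sp 2$; the paper's choice is slightly more elementary, but your version is equally valid.
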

\begin{proof}
 Choose $p\in\R$. For each $\alpha<\kappa$, let $S_\alpha\subset\R\setminus\{p\}$ be a sequence converging to $p$. It is easy to choose these sequences in such a way that $S_\alpha\cap S_\beta=\emptyset$ if $\alpha\neq\beta$. Let $Y=\{p\}\cup\left(\bigcup\{S_\alpha:\alpha<\kappa\}\right)$. Then it easily follows that $Y$ is a continuous, one-to-one image of $X_{\filt,\kappa}$. From Theorem \ref{Cp-facts}, $C_p(X_{\filt,\kappa})$ is separable.
\end{proof}

We leave the proof of the following fact to the reader.

\begin{lemma}
 If $\filt$ is a free filter on $\omega$ and $\omega\leq\kappa\leq\cont$, then $C_p(X_{\filt,\kappa})$ is homeomorphic to $(C_\filt)\sp\kappa$
\end{lemma}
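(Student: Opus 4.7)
The plan is to parameterize $C_p(X_{\filt,\kappa})$ and then reduce the claim to absorbing one extra $\R$ factor into an infinite power, using freeness of $\filt$ together with the cardinal identity $1+\kappa=\kappa$. Let $\widetilde{C}_\filt$ denote the $\R$-valued analogue of $C_\filt$, namely
$$
\widetilde{C}_\filt=\{h\in\R^\omega:\forall m<\omega,\ \{n\in\omega:|h(n)|<2^{-m}\}\in\filt\}.
$$
Applying a homeomorphism $\R\to(-1,1)$ that fixes $0$ in every coordinate yields $\widetilde{C}_\filt\cong C_\filt$, so it suffices to produce a homeomorphism $C_p(X_{\filt,\kappa})\cong\widetilde{C}_\filt^\kappa$.

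First I would parameterize $C_p(X_{\filt,\kappa})$. Inspection of the basic neighborhoods of $\infty$ shows that a function $f\colon X_{\filt,\kappa}\to\R$ is continuous iff for every $\alpha<\kappa$ and every $\epsilon>0$ the set $\{n:|f(\alpha,n)-f(\infty)|<\epsilon\}$ belongs to $\filt$, i.e., each translated row $f(\alpha,\cdot)-f(\infty)$ lies in $\widetilde{C}_\filt$. Hence the map $f\mapsto(f(\infty),(f(\alpha,\cdot)-f(\infty))_{\alpha<\kappa})$ is a homeomorphism $C_p(X_{\filt,\kappa})\cong\R\times\widetilde{C}_\filt^\kappa$.

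Next I would split an $\R$-factor off of $\widetilde{C}_\filt$. Let $K_0=\{h\in\widetilde{C}_\filt:h(0)=0\}$. Since $\filt$ is free it contains all cofinite sets, and it follows that for any $A\subset\omega\setminus\{0\}$ we have $A\in\filt$ iff $A\cup\{0\}\in\filt$; therefore a sequence $h\in\R^\omega$ lies in $\widetilde{C}_\filt$ iff its restriction to $\omega\setminus\{0\}$ $\filt$-converges to $0$. Consequently the map $(c,h)\mapsto h'$, where $h'(0)=c$ and $h'(n)=h(n)$ for $n\geq1$, is a homeomorphism $\R\times K_0\to\widetilde{C}_\filt$. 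Taking $\kappa$-th powers, $\widetilde{C}_\filt^\kappa\cong\R^\kappa\times K_0^\kappa$.

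Putting the pieces together yields
$$
C_p(X_{\filt,\kappa})\cong\R\times\widetilde{C}_\filt^\kappa\cong\R\times\R^\kappa\times K_0^\kappa\cong\R^{1+\kappa}\times K_0^\kappa,
$$
and because $\kappa$ is infinite a bijection $\{*\}\sqcup\kappa\to\kappa$ produces $\R^{1+\kappa}\cong\R^\kappa$, so $C_p(X_{\filt,\kappa})\cong\R^\kappa\times K_0^\kappa\cong\widetilde{C}_\filt^\kappa\cong(C_\filt)^\kappa$. The step requiring genuine thought is the decomposition $\widetilde{C}_\filt\cong\R\times K_0$, where freeness of $\filt$ is essential; the assumption $\kappa\geq\omega$ is used only at the very last stage through the cardinal identity $1+\kappa=\kappa$.
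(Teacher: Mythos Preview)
The paper does not give a proof of this lemma; it simply says ``We leave the proof of the following fact to the reader.'' Your argument is correct and supplies exactly the kind of routine verification the author evidently had in mind: identifying $C_p(X_{\filt,\kappa})$ with $\R\times\widetilde{C}_\filt^{\,\kappa}$ via $f\mapsto\bigl(f(\infty),(f(\alpha,\cdot)-f(\infty))_{\alpha<\kappa}\bigr)$, then absorbing the extra $\R$ factor using freeness of $\filt$ (to split $\widetilde{C}_\filt\cong\R\times K_0$) together with $1+\kappa=\kappa$. Each step is a homeomorphism induced by a coordinate permutation or a coordinatewise continuous map on the ambient product, so there are no hidden topological issues.
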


So the natural question for function spaces is the following.

\begin{ques}
 Find all free filters $\filt\subset\powerw$ and cardinals $\omega\leq\kappa\leq\cont$ such that $C_p(X_{\filt,\kappa})$ is \CDH{}.
\end{ques}

\section{Osipov's remarks}

After submitting this paper for publication, Professor Alexander V. Osipov contacted the author regarding his paper \cite{osipov}, which has some results that provide further insight to Question \ref{question-metrizable}. We refer the reader to Professor Osipov's paper \cite{osipov} from which the following can be obtained.
 
 \begin{coro}
  Let $X$ be separable and metrizable such that $C_p(X)$ is \CDH{}. Then $X$ is a $\gamma$-set.
 \end{coro}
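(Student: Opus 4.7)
The plan is to reduce the statement to the classical Gerlits--Nagy theorem, which says that for any Tychonoff space $X$, the space $C_p(X)$ is Fr\'echet--Urysohn if and only if $X$ is a $\gamma$-set. Thus it suffices to prove that under the hypotheses of the corollary---$X$ separable metrizable and $C_p(X)$ \CDH{}---the space $C_p(X)$ is Fr\'echet--Urysohn.

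First I would verify that $C_p(X)$ is separable: since $X$ is separable metrizable, the identity $X\to X$ is a continuous bijection into a separable metric space, so by Theorem \ref{Cp-facts}(a), $C_p(X)$ is separable. Thus $C_p(X)$ is a (not necessarily metrizable) separable \CDH{} topological group. Moreover, the arguments of Section \ref{gen-obs} show that $C_p(X)$ must be a Baire space, since it contains a copy of $\R$ (so cannot be meager) and is homogeneous as a topological group.

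For the main step, I would argue by contrapositive. Suppose $C_p(X)$ is not Fr\'echet--Urysohn: there exist $A\subset C_p(X)$ and $f\in\overline{A}\setminus A$ such that no sequence from $A$ converges to $f$. The idea is to use this obstruction to construct two countable dense sets $D_0,D_1\subset C_p(X)$ that cannot be interchanged by any autohomeomorphism. Specifically, I would enrich $A$ (using homogeneity and translations by the group structure of $C_p(X)$) to produce one countable dense set whose closure-structure witnesses non-sequential accumulation at a translate of $f$, while arranging the second countable dense set so that every countable subset has its closure approached sequentially. A homeomorphism $h$ with $h[D_0]=D_1$ would transport the non-sequential accumulation to a sequential one, a contradiction.

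The main obstacle is making the construction of $D_0,D_1$ precise, especially arranging the ``sequentially tame'' countable dense set $D_1$; this is where the results of Osipov \cite{osipov} enter, since that paper establishes exactly the kind of convergence/selection-principle consequences of \CDH{} for $C_p$-spaces that are needed here. Once Fr\'echet--Urysohnness of $C_p(X)$ is established, Gerlits--Nagy immediately yields that $X$ is a $\gamma$-set, completing the proof.
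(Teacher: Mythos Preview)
Your overall target---reducing to Gerlits--Nagy---is reasonable, but the contrapositive construction you sketch has a real gap. For the argument to work, your ``sequentially tame'' $D_1$ must satisfy: for every subset $B\subset D_1$ and every $g\in\overline{B}$, some sequence in $B$ converges to $g$. That is strictly stronger than $D_1$ being sequentially dense; it is essentially asking that $D_1$ witness the Fr\'echet--Urysohn property for all of $C_p(X)$, which is exactly what you are trying to prove. Osipov's paper does not construct such a $D_1$. If instead you only take $D_1$ sequentially dense, then a homeomorphism $h$ with $h[D_0]=D_1$ merely tells you that $D_0$ is sequentially dense too; it gives no control over sequential limits from the particular subset $A\subset D_0$, so no contradiction arises.

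The paper's route avoids this trap by not aiming at Fr\'echet--Urysohn directly. It uses the chain: $X$ separable metrizable $\Rightarrow$ $C_p(X)$ is \emph{sequentially separable} (one countable sequentially dense set exists); then \CDH{} immediately upgrades this to \emph{strongly sequentially separable} (every countable dense set is sequentially dense), since a homeomorphism carrying the good dense set onto an arbitrary one transfers sequential density; finally, Osipov's theorem says strong sequential separability of $C_p(X)$ forces $X$ to be a $\gamma$-set. The key conceptual point is that \CDH{} moves entire countable dense sets, not their subsets, so the property one can hope to propagate is sequential density of the whole set---precisely what strong sequential separability captures, and not the subset-wise condition your $D_1$ would need.
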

 \begin{proof}
  If $X$ is separable and metrizable then $C_p(X)$ is sequentially separable (see \cite{osipov}). If $C_p(X)$ is also \CDH{}, then it is strongly sequentially separable (\cite{osipov}). By the theorems in \cite{osipov} it follows that $X$ is a $\gamma$-set.
 \end{proof}
 
 \begin{ques}
  Is there an uncountable $\gamma$-set X such that $C_p(X)$ is \CDH{}?
 \end{ques}
 
 Also, notice that the main result of this paper answers \cite[Problem 2]{osipov} in the negative.

\end{document}